\documentclass[]{amsart}
\usepackage[utf8]{inputenc}
\setlength{\intextsep}{0pt plus 2pt}
\usepackage{amsmath}
\usepackage{amsfonts}
\usepackage{amsthm}
\usepackage{amssymb}
\usepackage{color}
\usepackage{todonotes}
\usepackage{comment}
\usepackage{tikz}
\usepackage{hvfloat}
\usepackage{enumerate}
\usepackage{todonotes}
\usetikzlibrary{decorations.markings}
\usetikzlibrary{arrows}
\usepackage{graphicx}
\usepackage{caption}
\usepackage{subcaption}
\usepackage{mathtools}
\usepackage[all]{xy}

\usepackage{xcolor}

\makeatletter
\renewcommand{\boxed}[1]{\text{\fboxsep=.2em\fbox{\m@th$\displaystyle#1$}}}
\makeatother

\newcommand{\Z}{\mathbb{Z}}

\newcommand{\N}{\mathbb{N}}

\renewcommand{\le}{\leqslant}
\renewcommand{\ge}{\geqslant}

\theoremstyle{plain}% default
\newtheorem{thm}{Theorem}[section]
\newtheorem{lem}[thm]{Lemma}

\newtheorem{theorem}{Theorem}

%initial numbering ie 1, 2, 3, etc.

%rep theorem
\makeatletter
\newtheorem*{rep@theorem}{\rep@title}
\newcommand{\newreptheorem}[2]{%
\newenvironment{rep#1}[1]{%
\def\rep@title{#2 \ref{##1}}%
\begin{rep@theorem}}%
{\end{rep@theorem}}}
\makeatother
\newreptheorem{theorem}{Theorem}

%No numbering
\newtheorem*{thm*}{Theorem}
\newtheorem*{lem*}{Lemma}
\newtheorem*{prop*}{Proposition}
\newtheorem*{cor*}{Corollary}
\newtheorem*{qu*}{Question}
\newtheorem*{dt*}{Definition and Theorem}
\newtheorem*{exmp*}{Example}
\newtheorem*{exmps*}{Examples}
\newtheorem*{dprop*}{Definition and Proposition}
\newtheorem*{conj*}{Conjecture}

%usual numbering
\theoremstyle{definition}
\newtheorem{defn}[thm]{Definition}

\newtheorem*{defn*}{Definition}
\newtheorem*{not*}{Notation}

%remarks
\theoremstyle{plain}
\newtheorem{rem}[thm]{Remark}
\newtheorem*{rem*}{Remark}

%other commands
\DeclareMathOperator\FSym{FSym}
\DeclareMathOperator\FAlt{FAlt}
\DeclareMathOperator\Sym{Sym}
\newcommand{\id}{\mathrm{id}}
\DeclareMathOperator\supp{supp}

\begin{document}
\title{Invariable generation and the Houghton groups}

\author{Charles Garnet Cox}
\address{School of Mathematics, University of Bristol, Bristol BS8 1UG, UK}
\email{charles.cox@bristol.ac.uk}

\thanks{}

\subjclass[2010]{20B22, 20F05}

\keywords{invariable generation of infinite groups, finite index subgroups, generation, Houghton groups}
\date{\today}
\begin{abstract}
The Houghton groups $H_1, H_2, \ldots$ are a family of infinite groups. In 1975 Wiegold showed that $H_3$ was invariably generated (IG) but $H_1\le H_3$ was not. A natural question is then whether the groups $H_2, H_3, \ldots$ are all IG. Wiegold also ends by saying that, in the examples he had found of an IG group with a subgroup that is not IG, the subgroup was never of finite index. Another natural question is then whether there is a subgroup of finite index in $H_3$ that is not IG. In this note we prove, for each $n\in \{2, 3, \ldots\}$, that $H_n$ and all of its finite index subgroups are IG.

The independent work of Minasyan and Goffer-Lazarovich in June 2020 frames this note quite nicely: they showed that an IG group can have a finite index subgroup that is not IG.
\end{abstract}
\maketitle
\section{Introduction}
A group $G$ is invariably generated by a subset $S$ if replacing each element of $S$ with any of its conjugates still results in a generating set for $G$. A group that has an invariable generating set is called invariably generated (IG). If a finite choice for $S$ exists, then we say that $G$ is finitely invariably generated (FIG). It can be that an infinite set $S$ is required, even if $G$ is finitely generated, proved independently in \cite{ashot, exam2} (and in this case setting $S=G$ may be natural). If no invariable generating set exists, we say that the group is not invariably generated ($\neg$IG). In 2014 Kantor, Lubotzky, and Shalev studied this notion for infinite linear groups, and since this paper (\cite{intro}) many papers have investigated invariable generation for infinite groups. Dixon in \cite{Dixon} is generally the reference for introducing the concept for finite groups, all of which are IG, but the idea is so natural that equivalent notions have been considered before this e.g. \cite{intro2} in 1872 and \cite{Wiegold1, Wiegold2} for work on infinite groups. Examples of recent work include \cite{intro} on linear groups, \cite{topgen1, topgen} on topological groups, \cite{Gel} on convergence groups, \cite{thompson} on the Thompson groups F, T, V, and \cite{iteratedwreath, Cox5} on wreath products.

In \cite{intro, Wiegold1} it was shown that IG and FIG are stable under finite extensions, and both query whether the properties IG and FIG are stable under taking finite index subgroups. The answer is no, as shown independently in \cite{ashot, exam2}. At first glance this may seem disheartening. But such examples tell us that IG and FIG are not as tame as they may initially appear. We could therefore follow the direction that occurred as a reaction to the existence of a finitely presented group with an unsolvable word problem (and other such decision problems). In the context of FIG and IG groups, this yields 3 natural questions. In some sense all of these are subquestions of ``Can we describe the classes of FIG and IG groups?''.
\begin{enumerate}
\item[(Q1)] Which specific classes of groups have the property that FIG or IG are stable under taking finite index subgroups?
\item[(Q2)] Which specific classes of groups have the property that every finitely generated IG group in the class is FIG?
\item[(Q3)] Is it possible to make characterisations for classes of groups which do not appear as answers to (Q1) and (Q2)?
\end{enumerate}

This note tackles (Q1) for the Houghton groups. Houghton introduced his groups in \cite{houintro}, and used them to answer a question on the number of relative ends of a group. Since then, numerous authors have investigated their properties e.g.\ finiteness properties \cite{brown}, the conjugacy problem \cite{ConjHou}, centraliser structure \cite{simon}, the twisted conjugacy problem \cite{Cox1}, and the $R_\infty$-property \cite{sigmatheory,Cox2}. Their conjugacy growth functions are exponential, as noted in \cite{Sapir}. They are also isolated points in the space of groups \cite{isolated}. Of most interest to the author is the combinatorial nature of this family of groups, which leads to bespoke arguments generally being required to answer questions about them. More detailed introductions can be found in \cite{ConjHou, Cox1}.

\begin{not*} For a non-empty set $X$, let $\Sym(X)$ denote the group of all bijections on $X$. Given any $g\in \Sym(X)$, let $\supp(g):=\{x\in X\;:\; (x)g\ne x\}$. Then $\FSym(X):=\{g\in\Sym(X)\;:\;|\supp(g)|<\infty\}$ and $\FAlt(X)\le\FSym(X)$ consists of all even permutations (so that if $|X|=n<\infty$ then $\FAlt(X)\cong A_n$ and $\FSym(X)\cong S_n$).
\end{not*}
Of use in this note are the facts that $\FSym(X)$ is generated by the set of all 2-cycles and $\FAlt(X)$ is generated by the set of all 3-cycles. These groups also provide nice examples $\neg$IG groups: conjugating $(a_1\;\ldots\;a_m)\in\FSym(X)$ by $g\in \Sym(X)$ yields $(a_1g\;\ldots\;a_mg)$, meaning that we can fix a specific $x\in X$ and impose that each conjugacy class representative that we pick is in the stabiliser of $x$.
\begin{not*} Let $\N:=\{1, 2, 3, \ldots\}$ and for any $n\in \N$ let $X_n:=\{1,\ldots, n\}\times\N$. For each $i\in\{1,\ldots, n\}$ let $R_i:=\{(i,m)\;:\;m\in\N\}\subseteq X_n$, the $i$th ray of $X_n$.
\end{not*}

\begin{figure}[h!]
\centering
\begin{tikzpicture}[scale=0.5]

\foreach \x in {90, 225, 315}
{
\draw(\x:4.5cm)--(\x:0.5cm);
%\filldraw(\x:4.5cm)circle(2pt) --(\x:0.5cm)circle(2pt);
\filldraw(\x:4cm)circle(2pt)--(\x:1cm)circle(2pt);
\filldraw(\x:3.5cm)circle(2pt)--(\x:1.5cm)circle(2pt);
\filldraw(\x:3cm)circle(2pt)--(\x:2cm)circle(2pt);
\filldraw(\x:2.5cm)circle(2pt)--(\x:0.5cm)circle(2pt);
\node [below] at (0.5, 4.5) {$R_1$};
\node [below] at (3.5, -2) {$R_2$};
\node [below] at (-3.5, -2) {$R_3$};
}
\end{tikzpicture} \caption{Visualising the set $X_3=R_1\cup R_2\cup R_3$.}
\end{figure}
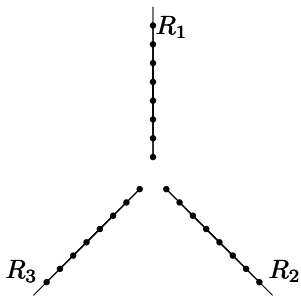
\begin{not*} Let $n\in\{2, 3, \ldots\}$ and $k\in\{2, \ldots, n\}$. Then $g_k$ is the element of $\Sym(X_n)$ with $\supp(g_k)=R_1\cup R_k$ and
\begin{equation*}
(i,m)g_k=\left\{\begin{array}{ll}(1, m+1) & \mathrm{if}\ i=1\ \mathrm{and}\ m\in\N\\ (1,1) & \mathrm{if}\ i=k\ \mathrm{and}\ m=1\\ (k, m-1) & \mathrm{if}\ i=k\ \mathrm{and}\ m\in\{2, 3, \ldots\}.
\end{array}\right.
\end{equation*}
\end{not*}
Originally the Houghton groups were the family $H_n$ where $n\in\{3, 4, \ldots\}$. In the following we also include the standard interpretations for $H_1$ and $H_2$.
\begin{defn*} Let $n\in\{3, 4,\ldots\}$. Then $H_n\le \Sym(X_n)$ with $H_n:=\langle g_2, \ldots, g_n\rangle$. We define $H_1:=\FSym(X_1)=\FSym(\N)$ and $H_2:=\langle g_2, ((1,1)\;(1,2))\rangle$. 
\end{defn*}
Note, given any countably infinite set $X$, that $\FSym(X)\cong \FSym(\N)$. It follows that $H_2=\FSym(X_2)\rtimes\langle g_2\rangle\cong \FSym(\Z)\rtimes \Z$. The following lemma is generally attributed to \cite{Wiegold2}, and the key observation is that the commutator of $g_2$ and $g_3$ is a 2-cycle.
\begin{lem*} Let $n\in\N$. Then $\FSym(X_n)\le H_n$.
\end{lem*}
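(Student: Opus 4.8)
The plan is to locate a single $2$-cycle inside $H_n$ and then show that conjugating it by the generators $g_k$ already produces a generating set of $\FSym(X_n)$. The case $n=1$ needs no argument, since $H_1$ is by definition $\FSym(X_1)$; so assume $n\ge 2$ and set $\tau:=((1,1)\;(1,2))$. First I would check that $\tau\in H_n$. For $n=2$ this is one of the two defining generators of $H_2$, so there is nothing to do (it is also part of the stated decomposition $H_2=\FSym(X_2)\rtimes\langle g_2\rangle$). For $n\ge 3$ I would compute the commutator $[g_2,g_3]=g_2^{-1}g_3^{-1}g_2g_3$ directly from the defining formula for the $g_k$: every point outside $R_1\cup R_2\cup R_3$ is fixed by both $g_2$ and $g_3$, while on the remaining points the two maps agree except in how they feed $R_2$, respectively $R_3$, into the base ray $R_1$; chasing each point through $g_2^{-1}g_3^{-1}g_2g_3$ one finds that everything cancels apart from the interchange of $(1,1)$ and $(1,2)$, so $[g_2,g_3]=\tau\in H_n$. (One has only to be a little careful with the right-action convention and with the sign convention for the commutator, but since $\tau^{-1}=\tau$ any choice gives the same conclusion.) This is the one genuine computation in the proof.

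Next I would spread $\tau$ around using the $g_k$. Recall that conjugating a cycle $(c_1\;\ldots\;c_r)$ by $g$ produces $(c_1g\;\ldots\;c_rg)$, so every element $\tau^{g_k^{\,t}}$ with $t\in\Z$ and $2\le k\le n$ lies in $H_n$, and all of these are read directly off the formulas for the $g_k$. Conjugating $\tau$ by the non-negative powers $g_2^{\,t}$ gives $\bigl((1,1+t)\;(1,2+t)\bigr)$, hence all adjacent $2$-cycles $\bigl((1,m)\;(1,m+1)\bigr)$ inside the ray $R_1$. For each $k$ with $2\le k\le n$ one gets $\tau^{g_k^{-1}}=\bigl((k,1)\;(1,1)\bigr)$, a $2$-cycle linking $R_k$ to $R_1$; conjugating this in turn by the powers $g_k^{-s}$ for $s\ge 1$ gives $\bigl((k,s)\;(k,s+1)\bigr)$, i.e.\ all adjacent $2$-cycles inside $R_k$. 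Thus $H_n$ contains every adjacent $2$-cycle within every ray, together with a $2$-cycle joining each ray to $R_1$.

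Finally, these $2$-cycles generate $\FSym(X_n)$: view $X_n$ as the vertex set of the graph whose edges are exactly the pairs produced above, so that each ray becomes a half-line and $(1,1)$ is joined to $(k,1)$ for every $k$ with $2\le k\le n$; this graph is a tree, in particular connected. For any edge-path $w_0,w_1,\ldots,w_r$ in a tree one has the identity $(w_0\;w_r)=(w_0\;w_1)(w_1\;w_2)\cdots(w_{r-1}\;w_r)\cdots(w_1\;w_2)(w_0\;w_1)$, so every $2$-cycle of $X_n$ is a product of $2$-cycles lying in $H_n$; since $\FSym(X_n)$ is generated by the set of all its $2$-cycles, we conclude $\FSym(X_n)\le H_n$. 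The only step I expect to require real care is the commutator computation in the first paragraph; everything after it is the standard fact that a connected family of transpositions generates the finitary symmetric group.
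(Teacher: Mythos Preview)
Your proof is correct and follows exactly the approach the paper indicates: the paper does not give a full proof but merely attributes the result to Wiegold and records that ``the commutator of $g_2$ and $g_3$ is a 2-cycle'', which is precisely your key computation. Your write-up supplies the remaining routine details (spreading the 2-cycle to a connected family of transpositions via conjugation by the $g_k$) that the paper omits.
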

Moreover, the elements of $H_n$ are exactly those of $\Sym(X_n)$ that are `eventually translations'. We can state this more formally, using \cite[Lem. 2.1]{ConjHou}.
\begin{not*} Let $n\in\{2, 3,\ldots\}$ and $g\in H_n$. For each $i\in \{1, \ldots, n\}$, let $t_i(g)\in\Z$ be chosen so that $(i,m)g=(i,m+t_i(g))$ for all but finitely many points $(i,m)\in R_i$. Let $\underline{t}(g):=(t_1(g),\ldots,t_n(g))\in\Z^n$ and for any $X\subseteq H_n$ let $\underline{t}(X):=\{\underline{t}(x)\;:\;x\in X\}$. Finally, for each $i\in \{1, \ldots, n\}$, let $z_i(g)$ be the smallest natural number such that $(i,m)g=(i,m+t_i(g))$ for all $m\in\{z_i(g), z_i(g)+1, \ldots\}$.
\end{not*}
In Section 2 we show the following, which is achieved by writing down specific generating sets and then proving that they are invariable generating sets. We use the standard notation, for a group $G$, that $d(G):=\inf\{|S|\;:\;\langle S \rangle=G\}$.
\begin{theorem} \label{mainthmA} Let $n\in \{2, 3, \ldots\}$. Then $H_n$, the nth Houghton group, is FIG. Furthermore, $H_n$ can be invariably generated by a set of size $d(H_n)$.
\end{theorem}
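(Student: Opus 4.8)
The plan is, for each $n$, to write down an explicit generating set $S_n\subseteq H_n$ with $|S_n|=d(H_n)$ and to check that replacing every member by an arbitrary conjugate still gives a generating set. One first records $d(H_n)$. For $n\ge 3$ the commutator $[g_2,g_3]$ is a transposition, and the only normal subgroups of $H_n$ contained in $\FSym(X_n)$ are $1$, $\FAlt(X_n)$ and $\FSym(X_n)$, so the normal closure of $[g_2,g_3]$ is $\FSym(X_n)$; hence $\FSym(X_n)\le[H_n,H_n]\le\ker\underline t=\FSym(X_n)$, whence $[H_n,H_n]=\FSym(X_n)$ and $\underline t$ induces the abelianisation $H_n^{\mathrm{ab}}\cong\Lambda:=\underline t(H_n)=\{a\in\Z^n : \sum_i a_i=0\}\cong\Z^{n-1}$. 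With $\{g_2,\dots,g_n\}$ this gives $d(H_n)=n-1$. For $n=2$ one has $[H_2,H_2]=\FAlt(X_2)$, so $H_2^{\mathrm{ab}}\cong\Z/2\oplus\Z$, and since $H_2$ is non-cyclic and $2$-generated, $d(H_2)=2$.

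For the reduction, let $S_n$ be any generating set of $H_n$ of size $d(H_n)$ and, for an arbitrary choice of conjugates, put $K:=\langle s^{h_s} : s\in S_n\rangle$. As conjugation is trivial on the abelian group $H_n^{\mathrm{ab}}$, $K$ still surjects onto $H_n^{\mathrm{ab}}$; hence $\underline t(K)=\Lambda$, so $K\,\FSym(X_n)=H_n$, and $K$ lies in no index-$2$ subgroup of $H_n$. Granting these, it is enough to prove $\FAlt(X_n)\le K$: then $K\cap\FSym(X_n)$ lies between $\FAlt(X_n)$ and $\FSym(X_n)$ and so equals one of them; were it $\FAlt(X_n)$ we would get $[H_n:K]=[\FSym(X_n):\FAlt(X_n)]=2$, contradicting that $K$ is in no index-$2$ subgroup; so $\FSym(X_n)\le K$, and with $\underline t(K)=\Lambda$ this forces $K=H_n$.

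It remains to choose $S_n$ so that $\FAlt(X_n)\le K$ always, and I would use two ingredients. First, $K$ should always meet $\FSym(X_n)$ nontrivially: for this, put into $S_n$ elements $a,b$ such that no conjugate of $b$ commutes with any conjugate of $a$, so that $1\ne[a^{h_a},b^{h_b}]\in K\cap\FSym(X_n)$ (the commutator lies in $\FSym(X_n)$ because $\underline t$ kills commutators). For $n\ge 3$ take $a=g_2$ and $b=\sigma:=g_2g_3\cdots g_n$: a conjugate of $g_2$ commutes only with permutations preserving the complement of its fixed-point set, which is a copy of $R_1\cup R_2$ changed in finitely many points, while $\sigma$ is a product of $n-1$ bi-infinite cycles and (for $n\ge 3$) no union of these has that shape, so no conjugate of $\sigma$ preserves such a set; a cycle-type comparison handles $n=2$ with $a=g_2$ and $b=g_2\cdot((1,1)\,(1,2))$. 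Second, $K$ should act primitively on $X_n$: since $\sigma$ has full support no conjugate of it fixes a point, so $K$ has no global fixed point, but one must also rule out every $K$-invariant partition of $X_n$ using the orbit structure of conjugates of the generators ($g_2^{h}$ is a bi-infinite cycle plus a ``fixed ray-bundle'', $\sigma^{h}$ is $n-1$ bi-infinite cycles, and $g_3,\dots,g_{n-1}$ tie the rays together). With a nontrivial finitary element and primitivity in hand, a classical Jordan--Wielandt type theorem — a primitive subgroup of $\Sym(X_n)$ containing a nontrivial element of $\FSym(X_n)$ contains $\FAlt(X_n)$ — finishes the proof. A workable choice is $S_n=\{g_2,g_3,\dots,g_{n-1}\}\cup\{\sigma\}$ for $n\ge 3$ (which is $\{g_2,g_2g_3\}$ when $n=3$) and $S_2=\{g_2,\,g_2\cdot((1,1)\,(1,2))\}$: these have size $d(H_n)$, the translation vectors of $S_n$ form a $\Z$-basis of $\Lambda$ (and $S_2$ also surjects onto the $\Z/2$ factor of $H_2^{\mathrm{ab}}$), and $g_n=(g_2\cdots g_{n-1})^{-1}\sigma$ shows $\langle S_n\rangle=H_n$.

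I expect the primitivity claim to be the main obstacle. That $K$ can fail to be primitive for a careless choice of $S_n$ is already clear, since $\{g_2,\dots,g_n\}$ is \emph{not} an invariable generating set: each $g_k$ fixes a ray, and one may conjugate all of $g_2,\dots,g_n$ so that they fix one common point of $X_n$. So the shape of $S_n$ genuinely matters, and excluding \emph{all} invariant partitions — not just common fixed points — is where the combinatorics of the Houghton groups (the precise cycle structure of conjugates of the $g_k$ and of $\sigma$, and the way turning the ``corners'' of $X_n$ links the rays) has to be worked through. The remaining points — the sizes and spanning properties of the $S_n$, and that they generate — are routine.
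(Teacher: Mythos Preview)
Your strategy is sound but genuinely different from the paper's, and the gap you yourself flag is real.

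\textbf{Comparison with the paper.} The paper never invokes primitivity or any Jordan--Wielandt theorem. For $n\ge 3$ it uses $\{g_2,\dots,g_{n-1},h\}$ with $h=g_ng_{n-1}\cdots g_2$ (so the same translation image as your $\sigma$) and proceeds entirely by explicit cycle manipulation: first it replaces the conjugated generators by elements $h_2,\dots,h_n$ with controlled supports (each $\supp(h_i)\subseteq R_1\cup R_i$); then it produces an odd finitary permutation via a commutator (your step) and from this a $3$-cycle; then, by conjugating that $3$-cycle through carefully chosen powers of $h$, $h_2$, and an auxiliary element $f_2$ with two fixed points on $R_2$, it manufactures the specific cycle $((2,s)\,(2,s+1)\,(2,s+2))$, which together with $h_2$ gives $\FAlt$ on a tail of one ray and hence all of $\FSym(X_n)$. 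For $n=2$ the paper uses $\{t,\,t(4\,3\,2\,1)\}$, where the second element has \emph{three} fixed points; these three fixed points are used explicitly to shepherd a $3$-cycle down to $(0\,1\,2)$. (The paper does remark, without proof, that your $S_2=\{t,\,t(0\,1)\}$ also works.) So the paper trades your single structural hammer for a sequence of concrete conjugations; what it buys is a proof that is completely self-contained and, crucially, adapts to the finite-index subgroups $U_v$ in Section~3, where primitivity of $K$ on $X_n$ is not available.

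\textbf{The gap.} Everything in your outline up to and including ``$K$ contains a nontrivial element of $\FSym(X_n)$'' is correct and routine: the abelianisation computation, the index-$2$ argument, and the non-commuting claim for $g_2$ and $\sigma$ all check out (the $\sigma'$-orbit through a tail of $R_3$ also meets a tail of $R_1$, so $\sigma'$ cannot preserve the support of any $g_2'$). The primitivity of $K=\langle S_n'\rangle$, however, is not proved, and it is not a formality. For $n=2$ it is easy: the only $g_2'$-invariant block systems are the residue-class partitions along the single cycle, and a permutation with exactly one fixed point and one infinite cycle on the rest cannot respect any of them. For $n=3$ it already requires a short case analysis (blocks meeting the fixed set of $g_2'$ are $g_2'$-invariant; comparing the action of $\sigma'$ on the two ends of the $g_2'$-orbit forces $d\mid 1$ for residue-class block systems; a block containing the whole $g_2'$-orbit must be all of $X_3$). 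For $n\ge 4$ the same end-comparison only gives $d\mid(n-2)$ from $g_2'$ and $\sigma'$ alone, so one must bring in $g_3',\dots,g_{n-1}'$ and argue more carefully; this is where the real content of your approach lies, and it is not written down. Until that analysis is carried out your proposal is a plausible programme rather than a proof.
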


Recall that $G$ and $H$ are \emph{commensurable} if there exist $A\le G$ and $B\le H$ both of finite index where $A\cong B$. The main work of this note is to prove the following.
\begin{theorem} \label{mainthmB} Let $n\in \{2, 3, \ldots\}$ and $G$ be commensurable to $H_n$, the nth Houghton group. Then $G$ is FIG.
\end{theorem}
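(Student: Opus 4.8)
The plan is to deduce Theorem~\ref{mainthmB} from Theorem~\ref{mainthmA} by reducing to finite-index subgroups of $H_n$ and then running the same style of explicit argument as was used for $H_n$ itself. For the reduction: if $G$ is commensurable to $H_n$, choose $A\le G$ and $B\le H_n$ of finite index with $A\cong B$, and let $A_0\trianglelefteq G$ be the normal core of $A$ in $G$. Then $A_0$ has finite index in $G$ and is isomorphic to a finite-index subgroup of $H_n$, so, since FIG is stable under finite extensions (see the introduction), it suffices to prove that \emph{every finite-index subgroup $B\le H_n$ is FIG}. Note that such a $B$ is finitely generated.

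\textbf{Structure of $B$.} Since $\FAlt(X_n)$ is an infinite simple group it has no proper finite-index subgroup, so $\FAlt(X_n)\le B$. As $\underline{t}$ is a homomorphism from $H_n$ onto a group isomorphic to $\Z^{n-1}$ with kernel $\FSym(X_n)$, the image $\Lambda:=\underline{t}(B)$ is a finite-index subgroup of $\Z^{n-1}$ (hence free abelian of rank $n-1$), and $B\cap\FSym(X_n)$ is either $\FAlt(X_n)$ or $\FSym(X_n)$. Thus $B$ is an extension of $\FAlt(X_n)$ or $\FSym(X_n)$ by $\Z^{n-1}$, mirroring $H_n$, and $B$ is generated by $B\cap\FSym(X_n)$ together with any elements whose $\underline{t}$-values generate $\Lambda$.

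\textbf{The candidate set and the two things to check.} The feature exploited throughout — already implicit in Theorem~\ref{mainthmA} — is that $\underline{t}$ takes values in an abelian group, hence is constant on each conjugacy class of $H_n$: conjugation may scramble the finite part of an element arbitrarily, but it cannot alter the ``translation data''. I would take the candidate generating set to be $\{b_1,\dots,b_{n-1},y\}$, where $\underline{t}(b_1),\dots,\underline{t}(b_{n-1})$ is a basis of $\Lambda$ and $y\in B\cap\FSym(X_n)$ is a single transposition (when $B\supseteq\FSym(X_n)$) or a single $3$-cycle (when $B\cap\FSym(X_n)=\FAlt(X_n)$), with the $b_j$ chosen of convenient shape, e.g.\ supported like products of the standard generators $g_k$ so that each moves points between several rays near a junction. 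Fix arbitrary conjugates $b_j^{h_j}$, $y^{h_0}$ and let $N$ be the subgroup they generate. Then $\underline{t}(N)=\langle\underline{t}(b_j)\rangle=\Lambda$ because $\underline{t}$ is conjugation-invariant, so $N$ surjects onto $\Lambda$ (a proper subgroup of $\Lambda$ cannot contain a generating set). Combined with the last sentence of the previous paragraph, it follows that $N=B$ as soon as we also know $B\cap\FSym(X_n)\le N$. So the whole problem reduces to establishing this containment for every choice of the $h_j$.

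\textbf{The main obstacle.} Proving $B\cap\FSym(X_n)\le N$ uniformly over all conjugating elements is the technical core, and the difficulty is precisely that the $h_j$ are adversarial, so that no particular transposition or $3$-cycle is forced into $N$. The plan is to combine two things: that $N$ is transitive on $X_n$ — indeed highly transitive far out along the rays, since $N$ contains elements that are eventually nontrivial translations on every ray, forcing orbits to merge — and that $N$ contains the single nontrivial element $y^{h_0}\in\FSym(X_n)$, whose support can then be swept across all of $X_n$ by conjugating by suitable products of the $b_j^{h_j}$. Far from the junctions the $b_j^{h_j}$ act as honest translations, which spreads the support of $y^{h_0}$ out along the rays, while the chosen junction behaviour of the $b_j$ (inherited, up to conjugacy, from the $g_k$) is what lets the support cross between rays and land on points in every residue class of every ray; the elements of $N$ obtained this way are then enough transpositions, respectively $3$-cycles, to generate $\FSym(X_n)$, respectively $\FAlt(X_n)$. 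Carrying out this combing argument with care — in particular, showing that the finitely many adversarial finite perturbations cannot obstruct reaching every point of $X_n$ — is where the bulk of the work lies; the case $n=2$, where $H_2=\FSym(X_2)\rtimes\langle g_2\rangle$ is especially concrete, can be treated directly and serves as a useful warm-up.
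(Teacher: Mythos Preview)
Your reduction to finite-index subgroups of $H_n$ is correct and matches the paper's (which goes one step further, observing that every finite-index subgroup of $H_n$ contains some $U_v:=\langle g_2^v,\ldots,g_n^v,\FAlt(X_n)\rangle$ with $v\in 2\N$ as a finite-index subgroup, so it suffices to treat these). The structural analysis of $B$ and the observation that $\underline t$ is conjugation-invariant are also fine.

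The gap is in the candidate set. Your set $\{b_1,\ldots,b_{n-1},y\}$ \emph{cannot} invariably generate $U_v$, and the failure is exactly the residue-class obstruction that your ``combing'' sketch glosses over. Take $n=2$ and $B=U_v$ to see it cleanly. Any $b_1\in U_v$ with $\underline t(b_1)=(v,-v)$ has exactly $v$ infinite orbits: far along each ray it acts as translation by $\pm v$, so each infinite orbit has a tail in a fixed residue class mod $v$ on $R_1$ and another on $R_2$, giving $v$ strands. Using Remark~\ref{keyremark} we may assume $b_1'=b_1$; the adversary then chooses the $3$-cycle $y'$ with support inside a \emph{single} infinite $b_1$-orbit. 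Both generators now preserve the partition of $X_2$ into $b_1$-orbits, so $\langle b_1,y'\rangle$ is contained in the (proper) block-stabiliser and in particular does not contain $\FAlt(X_2)$. Your claim that ``$N$ is transitive on $X_n$'' is therefore false in general: the elements of $N$ are eventual translations by \emph{multiples of $v$}, not by $1$, and no clever ``junction behaviour'' of a single $b_1$ can repair this, since the number and almost-shape of its infinite orbits are conjugacy invariants. The same residue-class phenomenon persists for $n\ge 3$; deferring $n=2$ to a ``direct'' argument does not address it.

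What the paper does instead is precisely to defeat this obstruction by enlarging $S$ with elements whose infinite-orbit combinatorics \emph{encode} permutations of the residue classes. Concretely, for $\Omega=\{(1,1),\ldots,(1,2v)\}$ one adjoins $g_2^{2v}\sigma_1$ and $g_2^{2v}\sigma_2$, where $\sigma_1,\sigma_2$ generate $\FAlt(\Omega)$. Each infinite orbit of $g_2^{2v}\sigma$ is almost equal to $R_{2,\,2v+1-r}\cup R_{1,(r)\sigma}$, so after any $\FAlt(X_n)$-conjugation one can still read off $\sigma$ from how the tails pair up. Comparing these elements with $h_2^2$ (an $\FAlt$-conjugate of $g_2^{2v}$, whose orbits pair tails via the identity) lets one manufacture, inside $\langle S'\rangle$, elements acting on a fixed window $\Omega_{S'}$ as arbitrary even permutations; from there a $3$-cycle in $\Omega_{S'}$ and the usual bootstrapping give $\FAlt(X_n)\le\langle S'\rangle$. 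In short, one finite cycle is not enough --- you need to smuggle a generating set for $\FAlt$ on $2v$ points into the conjugacy-invariant data of your candidate elements.
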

To prove this we note that, by \cite{intro, Wiegold1}, a finite extension of a FIG group is FIG. So in order to prove Theorem \ref{mainthmB}, it is sufficient to work with all finite index subgroups of $H_n$ for every $n\in\{2, 3,\ldots\}$. Such work may appear daunting. But the structure of the finite index subgroups of the Houghton groups is well understood. We take the following definition and lemma from \cite{Hou2}.
\begin{not*} Let $n\in\{2, 3, \ldots\}$ and $v\in 2\N$. Then $U_v:=\langle g_2^v, \ldots, g_n^v, \FAlt(X_n)\rangle$.
\end{not*}
We restrict ourselves to $v\in2\N$ since then $\FSym(X_n)\cap U_v=\FAlt(X_n)$.
\begin{lem*} Let $n\in\{2, 3, \ldots,\}$ and $U$ be a finite index subgroup of $H_n$. Then there exists a $d\in 2\N$ such that $U_d$ is finite index in $U$.
\end{lem*}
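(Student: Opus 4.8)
The plan is to show that every finite-index subgroup $U\le H_n$ already contains $\FAlt(X_n)$ and meets each cyclic subgroup $\langle g_k\rangle$ in a finite-index subgroup; a suitable common multiple of the resulting indices, made even, will then be the required $d$, and finiteness of $[U:U_d]$ will follow from finiteness of $[H_n:U_d]$.

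First I would check that $\FAlt(X_n)\le U$ for \emph{every} finite-index subgroup $U$ of $H_n$. Since $\FSym(X_n)\le H_n$, the subgroup $V:=U\cap\FSym(X_n)$ has finite index in $\FSym(X_n)$, so $V\cap\FAlt(X_n)$ has finite index in $\FAlt(X_n)$. The normal core of $V\cap\FAlt(X_n)$ in $\FAlt(X_n)$ (the intersection of its finitely many $\FAlt(X_n)$-conjugates, each of the same finite index) is therefore a finite-index, hence nontrivial, normal subgroup of the infinite simple group $\FAlt(X_n)$; so it equals $\FAlt(X_n)$, giving $\FAlt(X_n)\le V\le U$.

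Next, for each $k\in\{2,\ldots,n\}$ the element $g_k$ has infinite order (indeed $\underline{t}$ is a homomorphism and $t_1(g_k)=1$, so $t_1(g_k^m)=m$), hence $\langle g_k\rangle\cong\Z$; since $[H_n:U]<\infty$ we get $[\langle g_k\rangle:\langle g_k\rangle\cap U]<\infty$, so $\langle g_k\rangle\cap U=\langle g_k^{d_k}\rangle$ for some $d_k\in\N$. I would then set $d:=2\cdot\mathrm{lcm}(d_2,\ldots,d_n)\in2\N$. For each $k$, $d$ is a multiple of $d_k$, so $g_k^d\in U$; together with $\FAlt(X_n)\le U$ this yields $U_d=\langle g_2^d,\ldots,g_n^d,\FAlt(X_n)\rangle\le U$. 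Finally $[H_n:U_d]<\infty$: applying $\underline{t}$ identifies $H_n/\FSym(X_n)$ with $\{(a_1,\ldots,a_n)\in\Z^n:\sum_i a_i=0\}\cong\Z^{n-1}$, and the image of $U_d$ there is generated by $d\,\underline{t}(g_2),\ldots,d\,\underline{t}(g_n)$, hence has index $d^{n-1}$, while $\FSym(X_n)\cap U_d=\FAlt(X_n)$ because $d\in2\N$, of index $2$. So $[H_n:U_d]=2d^{n-1}$, and therefore $[U:U_d]\le[H_n:U_d]<\infty$.

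The argument is almost entirely elementary index-chasing; the one genuine external input is that the finitary alternating group on a countably infinite set is simple, used in the first step, and the only point demanding care is keeping $d$ even, so that $\FSym(X_n)\cap U_d=\FAlt(X_n)$ and the final index computation is valid.
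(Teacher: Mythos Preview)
Your argument is correct. The paper does not actually prove this lemma; it is quoted verbatim from \cite{Hou2} (Burillo, Cleary, Martino, R\"over), so there is no ``paper's own proof'' to compare against. What you have supplied is a clean self-contained argument, and every step checks out: simplicity of $\FAlt(X_n)$ forces $\FAlt(X_n)\le U$; intersecting with each $\langle g_k\rangle\cong\Z$ produces the indices $d_k$; and the short exact sequence $1\to\FSym(X_n)\to H_n\to\Z^{n-1}\to 1$ gives finiteness of $[H_n:U_d]$.

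One small remark: the exact value $[H_n:U_d]=2d^{\,n-1}$ depends on the assertion $\FSym(X_n)\cap U_d=\FAlt(X_n)$, which the paper also only states (just before the lemma) without justification. You do not in fact need the precise value: since $\FAlt(X_n)\le U_d$ already, the intersection $\FSym(X_n)\cap U_d$ has index at most $2$ in $\FSym(X_n)$ regardless, and finiteness of $[H_n:U_d]$ follows. So even if one is suspicious of that assertion, your proof of the lemma survives intact.
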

From this lemma we can prove Theorem \ref{mainthmB} by proving the following proposition, which is our sole aim in Section 3. A similar idea was used in \cite{Cox1}.
\begin{prop*} Let $n\in \{2, 3, \ldots\}$ and $v\in 2\N$. Then $U_v\le H_n$ is FIG.
\end{prop*}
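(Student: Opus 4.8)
The plan is to exploit the short exact sequence
\[
1 \longrightarrow \FAlt(X_n) \longrightarrow U_v \xrightarrow{\ \underline t\ } \Lambda \longrightarrow 1 ,
\]
where $\Lambda := \underline t(U_v)$ is free abelian of rank $n-1$, generated by the vectors $\underline t(g_k^v)=v(e_1-e_k)$ for $2\le k\le n$, and the kernel is $U_v\cap\FSym(X_n)=\FAlt(X_n)$ (this is exactly where $v\in 2\N$ enters). Two features of this sequence drive the argument: $\FAlt(X_n)$ is simple, since $X_n$ is infinite; and $U_v$ is finitely generated — being of finite index in $H_n$ — and acts highly transitively on $X_n$, as it contains $\FAlt(X_n)$. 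I shall also invoke the classical Jordan-type theorem that a primitive subgroup of $\Sym(X_n)$ which contains a non-identity finitary permutation must contain $\FAlt(X_n)$.

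The proposal is to exhibit a finite subset $S=\{c_1,\dots,c_r,\sigma\}\subseteq U_v$ with $\underline t(c_1),\dots,\underline t(c_r)$ generating $\Lambda$, at least one $c_i$ fixed-point-free on $X_n$, and $\sigma$ a $3$-cycle, and to prove that $S$ invariably generates $U_v$ — in particular $\langle S\rangle=U_v$, on taking trivial conjugates, so that generation need not be checked separately. Given any conjugates $a_i$ of $c_i$ and $\tau$ of $\sigma$, put $V:=\langle a_1,\dots,a_r,\tau\rangle\le U_v$. Since $\underline t$ is constant on conjugacy classes, $\underline t(V)\supseteq\langle\underline t(c_i):i\rangle=\Lambda$, so $V\cdot\FAlt(X_n)=U_v$; also $\tau\in V$ is a non-identity finitary permutation; and $V$ has no fixed point, one of its generators being fixed-point-free. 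Hence, the moment one knows $V$ acts primitively on $X_n$, the cited theorem gives $\FAlt(X_n)\le V$, and then $V\supseteq\FAlt(X_n)$ together with $\underline t(V)=\Lambda$ forces $V=U_v$. Everything therefore reduces to choosing the $c_i$ so that, whatever the conjugators, $V$ is primitive on $X_n$.

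That reduction is the crux, and it is here that the combinatorics of $H_n$ must be used. Conjugation within $H_n$ preserves the translation lengths $\underline t(\,\cdot\,)$, hence also the ``stream-matching'' data of each $c_i$ at infinity — which incoming stream along a ray on which $c_i$ moves points inward is joined to which outgoing stream along a ray on which it moves points outward. The $c_i$ are to be chosen so that, for every choice of conjugators, these matchings for $a_1,\dots,a_r$ are forced to interlock: the orbits of $\langle a_1,\dots,a_r\rangle$ then necessarily fill up $X_n$ (transitivity) and, with a sharper choice designed to defeat each type of candidate block system (finite blocks; finitely many infinite blocks; infinitely many infinite blocks), leave no room for a nontrivial one (primitivity). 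Taking $\sigma$ to be a $3$-cycle helps here: in any imprimitive action a $3$-cycle has support inside a single block, so $\tau$ can never help build a block decomposition and the entire burden falls on the $a_i$, which we control. For $n\ge 3$ there is slack, since $\Lambda$ has rank $\ge 2$ and the translation directions can be spread out while also keeping one generator with all translation lengths nonzero (hence fixed-point-free, whatever its conjugate); for $n=2$, where every nonzero vector of $\Lambda=v\Z\,(e_1-e_2)$ points the same way, one is instead forced to take two fixed-point-free generators $c_1,c_2$ with $\underline t(c_1)=\underline t(c_2)=v(e_1-e_2)$ whose residue-matchings in $\Sym(\Z/v)$ differ by a $v$-cycle — which already forces $\langle a_1,a_2\rangle$ transitive — and then to exclude imprimitivity by hand. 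I expect establishing this robust primitivity to be the main technical obstacle; the remainder is bookkeeping with the exact sequence above.
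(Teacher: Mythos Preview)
Your reduction via the exact sequence and Jordan's theorem is a genuinely different route from the paper's, and the bookkeeping part is fine; but the proposal is not a proof, because the step you yourself call ``the main technical obstacle'' --- primitivity of $V$ for \emph{every} choice of conjugators --- is not carried out, and several of the heuristics you offer towards it are shaky. First, your observation that a $3$-cycle has support inside a single block cuts the wrong way: since the adversary chooses where $\tau$ lands, $\tau$ can never destroy a block system, so you must in fact prove that $\langle a_1,\dots,a_r\rangle$ is already primitive, with no help from $\tau$. Second, the ``stream-matching'' data is not a conjugation invariant in the generality you claim. Conjugation by $u$ shifts the residue class on ray $i$ by $t_i(u)$ modulo $|t_i(c)|$; for conjugation in $U_v$ one has $t_i(u)\in v\Z$, which kills the shift only when $|t_i(c)|=v$. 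Your fixed-point-free generator for $n\ge3$ necessarily has $|t_1|=(n-1)v$ on some ray, so its matching data on that ray genuinely moves under $U_v$-conjugation, and for conjugation in $H_n$ (as you wrote) nothing survives at all. Third, even for $n=2$, having $\pi_{c_2}\pi_{c_1}^{-1}$ a $v$-cycle plausibly gives transitivity of $\langle a_1,a_2\rangle$, but you give no mechanism to exclude, e.g., a system of two infinite blocks or blocks that are unions of residue classes modulo a proper divisor of $v$; the adversary controls the finitary part of each conjugator independently, so such systems cannot be ruled out by asymptotic data alone.

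The paper's argument bypasses primitivity entirely. It also exploits rigidity of infinite-orbit data under $\FAlt(X_n)$-conjugation, but constructively: the generating set is enlarged by two elements $g_2^{2v}\sigma_1,\,g_2^{2v}\sigma_2$ with $\sigma_1,\sigma_2\in\FAlt(\Omega)$ for a window $\Omega\subset R_1$ of size $2v$ and $\langle\sigma_1,\sigma_2\rangle=\FAlt(\Omega)$. After passing to $\FAlt(X_n)$-conjugates $s_{\sigma_1},s_{\sigma_2}\in\langle S'\rangle$, one shows (Lemma~\ref{keyrem}) that suitable words in $h_2^{\pm2}$ and $s_{\sigma_j}$ act on a translated window $\Omega_{S'}$ exactly as conjugation by an arbitrary prescribed element of $\FAlt(\Omega_{S'})$. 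This lets one steer the conjugate $3$-cycle $\sigma_1'$ step by step into a $3$-cycle supported in $\Omega_{S'}$, then bootstrap to $\FAlt(\Omega_{S'})$, to $\FAlt(R_1^{(d)})$, and hence to $U_v$. In effect the paper's two auxiliary generators perform explicitly the work your primitivity argument would perform abstractly; it is not clear your approach can be completed without introducing comparable auxiliary elements, at which point you are close to reinventing the paper's proof.
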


\vspace{0.3cm}
\noindent\textbf{Acknowledgements.} I thank Kantor, Lubotzky, and Shalev whose paper, \cite{intro}, has brought about such diverse avenues of investigation regarding invariable generation of infinite groups.

\section{Invariable generation of the Houghton groups}
We work with two distinct cases: $n=2$ and $n>2$. The following remark will be useful throughout this note.
\begin{rem} \label{keyremark}Let $G$ be a group. For any $g\in G$, $\{s_i\;:\;i \in I\}$ generates $G$ if and only if $\{s_i^g\;:\;i \in I\}$ generates $G$. Therefore, when considering whether a generating set $S$ invariably generates $G$, without loss of generality we can assume that one $s\in S$ is conjugated to itself.
\end{rem}
\begin{not*} Let $t$ denote the transformation $t: z\mapsto z+1$ for all $z\in \Z$.
\end{not*}

\begin{defn} Given groups $G, H$ with $H\le G$ or $G\le H$, we say that $a, b \in G$ are $H$-\emph{conjugate} if there exists an $h\in H$ such that $h^{-1}ah=b$.
\end{defn}

\begin{lem} There exists a set, of size 3, that invariably generates $H_2$.
\end{lem}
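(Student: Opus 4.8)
We have $H_2 = \FSym(X_2) \rtimes \langle g_2\rangle \cong \FSym(\Z)\rtimes\Z$, with $\underline{t}(H_2)\cong\Z$ recording the translation length. The quotient $H_2/\FSym(X_2)\cong\Z$ is abelian, so any invariable generating set must contain an element $a$ with $t(a)=\pm1$ (up to replacing $g_2$ by $g_2^{-1}$, say $t(a)=1$); this is forced since a conjugate of $a$ has the same translation length and the image of $S$ in $\Z$ must generate $\Z$. The difficulty is that $\FSym(\Z)$ itself is not IG, so we cannot simply adjoin an invariable generating set of $\FSym(X_2)$; we must exploit the interaction between the translation $a$ and the finitary part.

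I would propose the explicit set $S=\{g_2,\ \sigma_1,\ \sigma_2\}$ where $\sigma_1,\sigma_2\in\FSym(X_2)$ are chosen cleverly --- for instance $\sigma_1=((1,1)\ (1,2))$ and $\sigma_2$ a transposition or small-support permutation positioned so that the "drift" caused by conjugating $g_2$ cannot be matched by the (bounded-support) conjugates of $\sigma_1,\sigma_2$. By Remark \ref{keyremark} we may assume $g_2$ is conjugated to itself. So fix arbitrary $g_2$-, hence $H_2$-, conjugates $\tau_i=h_i^{-1}\sigma_i h_i$ and let $G_0=\langle g_2,\tau_1,\tau_2\rangle$. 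The plan has three steps. First, show $\underline{t}(G_0)=\Z$: immediate, since $t(g_2)=1$. Second, show $G_0\cap\FSym(X_2)$ is non-trivial, in fact contains a transposition: the commutator $[g_2,\tau_1]$ or a conjugate-difference like $\tau_1\cdot(g_2^{-k}\tau_1 g_2^k)^{-1}$ lands in $\FSym(X_2)$, and one checks it is not the identity because $\tau_1$ has support of size exactly $2$ (support size is a conjugacy invariant) so it genuinely moves points, and a short computation shows some such product is a single transposition. Third, and this is the crux, upgrade "$G_0$ contains a transposition and surjects onto $\Z$" to "$G_0\supseteq\FSym(X_2)$, hence $G_0=H_2$": since $g_2\in G_0$ acts on $\Z$ as the translation $t$, conjugating a transposition $((1,a)\ (1,b))\in G_0$ by powers of $g_2$ sweeps it across all of $R_1$, and these generate $\FSym(R_1)=\FSym(X_2)$ (noting $R_2$ is the pre-ray, but $\FSym$ of two rays glued at $\infty$ is generated by transpositions within one ray together with one transposition linking the rays, which $\sigma_2$ supplies).

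The step I expect to be the main obstacle is the second one --- guaranteeing that $G_0\cap\FSym(X_2)\ne 1$ \emph{no matter how} the $\sigma_i$ are conjugated. An adversary picks $h_i$ to push the supports of $\tau_i$ far out along the rays, and a priori the supports of $g_2$-conjugates of $\tau_i$ could all be disjoint from each other in a way that prevents any cancellation producing a short cycle. The resolution is to use \emph{two} finitary generators with incommensurable cycle types (e.g.\ a transposition and a $3$-cycle, so that $\sigma_2\notin\langle\text{transpositions available from }\sigma_1\rangle$ for parity reasons is \emph{not} the obstruction --- rather, we use that $\langle g_2,\tau_1\rangle$ restricted to its action far out on $R_1$ must eventually contain the transposition $((1,m)\ (1,m+1))$ for all large $m$), combined with the rigidity that $t(\tau_i)=0$ forces $\tau_i$ to have bounded support. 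Concretely: $\langle g_2,\tau_1\rangle$ contains $\tau_1^{g_2^k}\tau_1^{-1}$ for all $k$; for $k$ larger than the diameter of $\supp(\tau_1)$ this is a product of two disjoint $2$-element-support permutations, i.e.\ either a $3$- or $4$-cycle or (if $\tau_1$ was a transposition) a product of two disjoint transpositions, and iterating/commuting these recovers a single transposition, which then sweeps out $\FSym(R_1)$ under $\langle g_2\rangle$. One then checks the linking transposition between $R_1$ and $R_2$ is obtained from $\sigma_2$ by the same sweeping argument, completing $G_0=H_2$.
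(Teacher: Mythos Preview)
Your proposal has a genuine and fatal gap: if $\sigma_1,\sigma_2\in\FSym(X_2)$ are \emph{both} finitary, then $\{g_2,\sigma_1,\sigma_2\}$ can \emph{never} invariably generate $H_2$, regardless of how cleverly you choose them. Since $\FSym(\Z)$ acts highly transitively on $\Z$, the adversary may conjugate each $\sigma_i$ to some $\tau_i$ with $\supp(\tau_i)\subset 2\Z$. Every $t$-conjugate of such a $\tau_i$ then lies in $\FSym(2\Z)\cup\FSym(2\Z+1)$, and hence
\[
\langle t,\tau_1,\tau_2\rangle\cap\FSym(\Z)\ \le\ \FSym(2\Z)\times\FSym(2\Z+1)\ \lneq\ \FSym(\Z),
\]
so $\langle t,\tau_1,\tau_2\rangle\ne H_2$. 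Concretely, your Step~3 claim that ``conjugating a transposition $(a\;b)$ by powers of $g_2$ sweeps out $\FSym(R_1)$'' is false unless $|b-a|=1$: the translates $\{(a+k\;b+k):k\in\Z\}$ generate only $\prod_{j}\FSym(|b-a|\Z+j)$. The adversary simply picks $\tau_1=(0\;2)$ and you are stuck. (Your Step~2, incidentally, is trivial: $\tau_1$ itself already lies in $G_0\cap\FSym(X_2)$.)

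The paper avoids this by taking the third generator to be the \emph{infinite-support} element $t(0\;1)$, whose cycle type --- one fixed point, one infinite orbit on the complement --- is rigid under conjugation. Any conjugate can be normalised (by a power of $t$) to an element $f$ fixing $0$ and acting \emph{transitively} on $\Z\setminus\{0\}$. That transitivity is exactly what lets one conjugate an arbitrary transposition $(0\;c)$ down to $(0\;1)$, which is the step your finitary generators cannot supply. So the missing idea is that at least one of the auxiliary generators must carry an infinite orbit structure that survives conjugation and furnishes transitivity on a cofinite set.
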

\begin{proof} We work with $\FSym(\Z)\rtimes \langle t\rangle\cong H_2$. Let $S$ consist of
\begin{enumerate}[i)]
\item the transformation $t$
\item the element $t(0\;1)$, which consists an infinite orbit with support $\Z\setminus\{0\}$ and a single fixed point
\item the 2-cycle $(0\;1)$.
\end{enumerate}
By Remark \ref{keyremark}, we can assume that $t$ is fixed. Any $\Sym(\Z)$-conjugate of $t(0\;1)$ will fix some point $x\in\Z$ and have an infinite orbit with support equal to $\Z\setminus \{x\}$ (since conjugation by elements of $\Sym(X)$ preserves cycle-type). We can therefore conjugate this, using a suitable power of $t$, to an element $f$ which consists of an infinite orbit equal to $\Z\setminus\{0\}$ and which fixes 0. A conjugate of $(0\;1)$ will be equal to $(a\;b)$ for some $a, b \in \Z$. So conjugating $(a\;b)$ by $t^{b-a}$ will produce $(0\;b-a)$. Conjugating this 2-cycle by any power of $f$ will provide a 2-cycle of the form $(0\;c)$. Now, because $f$ acts transitively on $\Z\setminus\{0\}$, there will be a $d\in \Z$ such that $f^{-d}(0\;b-a)f^d=(0\;1)$. Hence $\langle t, f, (a\;b)\rangle=\langle (0\;1), t\rangle =\FSym(\Z)\rtimes \langle t\rangle$, as required.
\end{proof}
We now work to show that $H_2$ can be invariably generated by a set of size 2. Our first lemma is set up more generally so that it can be used for $H_n$ where $n\in\{2, 3,\ldots\}$.

\begin{lem}\label{3cycle} Let $n\in\{2, 3,\ldots\}$ and $\sigma\in\FSym(X_n)$. If $f\in H_n$ has $\underline{t}(f)=\underline{t}(g_2)$ and $\supp(\sigma)\subset \{(1, m)\;:\;m\ge z_1(f)\}$, then $\langle f, \sigma\rangle$ contains a 3-cycle.
\end{lem}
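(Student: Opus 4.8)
The plan is to realise a $3$-cycle inside $\langle f,\sigma\rangle$ as a commutator of $\sigma$ with one of its own $f$-conjugates, exploiting that $f$ acts on a tail of the ray $R_1$ as a pure shift. (This is only meaningful when $\sigma\ne\id$, which we therefore assume; for $\sigma=\id$ the group $\langle f,\sigma\rangle=\langle f\rangle$ is cyclic.) The hypothesis $\underline{t}(f)=\underline{t}(g_2)$ forces $t_1(f)=1$, so $(1,m)f=(1,m+1)$ for every $m\ge z_1(f)$; since $\supp(\sigma)$ sits inside this region, for each integer $w\ge 0$ the element $\sigma_w:=f^{-w}\sigma f^{w}\in\langle f,\sigma\rangle$ is again finitary with $\supp(\sigma_w)=\{(1,m+w):(1,m)\in\supp(\sigma)\}$.

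Let $a$ and $b$ be the smallest and largest $m$ with $(1,m)\in\supp(\sigma)$. A nontrivial permutation moves at least two points and $\supp(\sigma)$ lies on the single ray $R_1$, so $w:=b-a\ge 1$. For this $w$ we have $\supp(\sigma)\subseteq\{(1,m):a\le m\le b\}$ and $\supp(\sigma_w)\subseteq\{(1,m):b\le m\le b+w\}$, so the two supports can meet only in the point $p:=(1,b)$; moreover $p\in\supp(\sigma)$ by the choice of $b$, and $p=(1,a+w)\in\supp(\sigma_w)$ because $(1,a)\in\supp(\sigma)$. Hence $\supp(\sigma)\cap\supp(\sigma_w)=\{p\}$ is a single point.

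Now I would invoke the elementary fact that if two finitary permutations $\alpha,\beta$ satisfy $\supp(\alpha)\cap\supp(\beta)=\{p\}$, then $[\alpha,\beta]=\alpha^{-1}\beta^{-1}\alpha\beta$ is exactly the $3$-cycle $(p\alpha\ \ p\ \ p\beta)$: a point outside $\supp(\alpha)\cup\supp(\beta)$ is fixed, a point of $\supp(\alpha)\setminus\{p\}$ is fixed unless it is $p\alpha$, a point of $\supp(\beta)\setminus\{p\}$ is fixed unless it is $p\beta$, and $p\alpha\mapsto p\mapsto p\beta\mapsto p\alpha$ under the four factors. Taking $\alpha=\sigma$ and $\beta=\sigma_w$ shows $\langle f,\sigma\rangle$ contains the $3$-cycle $\bigl((1,b)\sigma\ \ (1,b)\ \ (1,b)\sigma_w\bigr)$; these three points are distinct because $(1,b)\sigma\in\supp(\sigma)$ has second coordinate $<b$, the middle point has second coordinate $b$, and $(1,b)\sigma_w\in\supp(\sigma_w)$ has second coordinate $>b$. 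That completes the argument.

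There is no serious obstacle here; the only step demanding care is the commutator identity of the last paragraph, where one must carry the (right) action convention consistently through the short case analysis. Everything else is bookkeeping about how powers of $f$ translate the tail of $R_1$, made possible precisely by the hypothesis that $\supp(\sigma)$ avoids the finite initial portion of $R_1$ on which $f$ need not be a shift.
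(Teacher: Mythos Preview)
Your argument is correct and follows essentially the same strategy as the paper: translate $\sigma$ by a power of $f$ so that the two supports meet in a single point, then extract a $3$-cycle from a commutator-type product. The paper phrases the last step as conjugating $\sigma$ by the shifted copy to obtain $\alpha$ and then computing $\sigma\alpha^{-1}$, which unwinds to the same commutator you use; your invocation of the standard ``single common support point $\Rightarrow$ commutator is a $3$-cycle'' fact is a clean way to package that computation.
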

\begin{proof} Write $\sigma$ in disjoint cycle notation, so that $\sigma=\prod_{i=1}^k\sigma_i$. Also let $\supp(\sigma_1)$ contain $y$ where $y=(1,p)$ with $p:=\max\{m\in\N\;:\;(1, m)\in \supp(\sigma)\}$. Note that there is a $d\in\N$ such that $\supp(f^{-d}\sigma f^d)\cap\supp(\sigma)=y$. Hence conjugating $\sigma$ by $f^{-d}\sigma f^d$ we obtain $\alpha\in\FSym(R_1)$ where $(x)\alpha=(x)\sigma$ for all $x\in \supp(\sigma)\setminus \{y\}$. If $\sigma_1=(x_0\;x_1\;\ldots\;x_c\;y)$, then we can write $\alpha$ in disjoint cycle notation as $\alpha_1\prod_{i=2}^k\sigma_i$ where $\alpha_1=(x_0\;x_1\;\ldots\;x_c\;z)$ for some $z=(1,q)$ with $q>p$. Computing $\sigma\alpha^{-1}$ provides us with a 3-cycle.
\end{proof}

\begin{not*} Let $s=t(4\;3\;2\;1)$ and $s'$ denote a fixed choice of conjugate of $s$.
\end{not*}
Our aim is now to show that $\langle t, s'\rangle=\FSym(\Z)\rtimes\langle t\rangle$. Our approach will be to show that $\langle t, s'\rangle$ contains an odd permutation, that it contains $\FAlt(\Z)$, and then combine these to show that it contains $(0\;1)$.
\begin{lem}\label{oddperm} The group $\langle t, s'\rangle$ contains an element from $\FSym(\Z)\setminus\FAlt(\Z)$.
\end{lem}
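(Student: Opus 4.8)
The plan is to exploit the index-$2$ subgroup $H_2^+:=\FAlt(\Z)\rtimes\langle t\rangle$ of $H_2\cong\FSym(\Z)\rtimes\langle t\rangle$. This is a subgroup because conjugation preserves the cycle type, hence the parity, of a finitary permutation, so $\langle t\rangle$ normalises $\FAlt(\Z)$; moreover $[H_2:H_2^+]=[\FSym(\Z):\FAlt(\Z)]=2$, so $H_2^+$ is normal in $H_2$. The lemma will follow once we show $s'\notin H_2^+$: writing $s'=\rho t^{k}$ for the unique $\rho\in\FSym(\Z)$ and $k\in\Z$ (using $H_2=\FSym(\Z)\rtimes\langle t\rangle$), the condition $s'\notin\FAlt(\Z)\rtimes\langle t\rangle$ forces $\rho\notin\FAlt(\Z)$, and then $\rho=s't^{-k}\in\langle t,s'\rangle$ is the required element of $\FSym(\Z)\setminus\FAlt(\Z)$.

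So it remains to check that $s'\notin H_2^+$. Put $\tau:=t(4\;3\;2\;1)t^{-1}$; this is a finitary permutation (its support is $\{0,1,2,3\}$) and, being $\Sym(\Z)$-conjugate to the $4$-cycle $(4\;3\;2\;1)$, it is itself a $4$-cycle, hence an odd permutation, so $\tau\notin\FAlt(\Z)$. Since $s=\tau t$, this exhibits $s$ outside $\FAlt(\Z)\rtimes\langle t\rangle=H_2^+$. As $s'$ is an $H_2$-conjugate of $s$ and $H_2^+\trianglelefteq H_2$, the element $s'$ lies in the same (nontrivial) coset of $H_2^+$, whence $s'\notin H_2^+$ too.

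I do not expect a serious obstacle; the point to be careful about is the temptation to compute $s't^{-1}$ (equivalently $\rho$) explicitly. That element is finitary, but as $s'$ ranges over the conjugates of $s$ the cycle type of $s't^{-1}$ is not constant — for example it can be a product of a transposition and a $3$-cycle rather than a single $4$-cycle — so there is no single normal form to record. What is invariant under $H_2$-conjugation is the coset modulo the index-$2$ subgroup $H_2^+$, and tracking that coset is all the argument needs.
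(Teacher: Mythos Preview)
Your proof is correct and is essentially the same argument as the paper's. Both show that the $\FSym$-component $s't^{-1}$ is odd by exploiting that this parity is conjugation-invariant; the paper does this by the explicit computation $\alpha^{-1}s\alpha=\beta s$ with $\beta\in\FAlt(\Z)$, while you package the same observation as $s'$ lying in the nontrivial coset of the index-$2$ normal subgroup $\FAlt(\Z)\rtimes\langle t\rangle$.
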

\begin{proof} Clearly $s'=t^{-k}\alpha^{-1}s\alpha t^k$ using that any element of a semidirect product $\FSym(\Z)\rtimes\langle t\rangle$ decomposes as a product $\sigma t^l$ for some $\sigma\in\FSym(\Z)$ and $l\in \Z$. So $\alpha^{-1}s\alpha\in \langle t, s'\rangle$. Clearly $\alpha^{-1} s\alpha=\beta s$ for some $\beta\in\FAlt(\Z)$. Then, from our choice of $s$, we have that $st^{-1}\in\FSym(\Z)\setminus\FAlt(\Z)$ and so we also have that $\alpha^{-1}s\alpha t^{-1}=\beta st^{-1}\in \FSym(\Z)\setminus\FAlt(\Z)$.
\end{proof}
\begin{lem}\label{cycle012} The group $\langle t, s'\rangle$ contains $(0\;1\;2)$. In particular, $\FAlt(\Z)\le \langle t, s'\rangle$.
\end{lem}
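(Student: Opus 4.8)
My plan is to first locate a $3$-cycle inside $G := \langle t, s'\rangle$ via Lemma \ref{3cycle}, and then to promote that single $3$-cycle to all of $\FAlt(\Z)$ by a primitivity argument.

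I would begin exactly as in the proof of Lemma \ref{oddperm}: conjugating $s'$ by a suitable power of $t$ (an element of $G$) produces $f := \alpha^{-1}s\alpha \in G$ for some $\alpha \in \FSym(\Z)$. Since $\underline t$ is a homomorphism into the abelian group $\Z^2$ we have $\underline t(f) = \underline t(s) = \underline t(g_2)$, and since $f$ is $\FSym(\Z)$-conjugate to $s$, which has exactly three fixed points, so has $f$. Put $\mu := ft^{-1}$; then $\mu \in \FSym(X_2) = \FSym(\Z)$, it is non-trivial (as $f$ is conjugate to $s\neq t$), and because $G = \langle t,\mu\rangle$ the subgroup $N := G \cap \FSym(\Z)\trianglelefteq G$ is generated by the $t$-conjugates of $\mu$. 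The decisive local fact is that a fixed point $p$ of $f$ satisfies $(p)\mu = (p)f - 1 = p-1$: so $\mu$ moves a point exactly one step, and hence each $t$-conjugate of $\mu$ does too.

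Conjugating $\mu$ by a large enough power of $t$ gives $\sigma \in N\setminus\{\id\}$ with $\supp(\sigma)\subset \{(1,m): m\ge z_1(f)\}$; as $\underline t(f) = \underline t(g_2)$, Lemma \ref{3cycle} applies to $f$ and $\sigma$ and produces a $3$-cycle in $\langle f,\sigma\rangle\le N$. It remains to check that $N$ is primitive. Transitivity is immediate, since for each $x$ the appropriate $t$-conjugate of $\mu$ lies in $N$ and sends $x$ to $x-1$, so the $N$-orbit of any point is all of $\Z$. For primitivity, a block system into (necessarily infinite) blocks is impossible: each $g\in N\le \FSym(\Z)$ fixes any infinite block setwise, as it differs from $\id$ in only finitely many places, and a transitive group cannot stabilise a proper non-empty subset. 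A block system into finite blocks is excluded by a rigidity argument: since $N$ is $t$-invariant, $\mu$ would have to preserve such a block system together with all of its $t$-translates, whose common refinement is trivial; this forces $\mu$ to be, locally, a $\pm 1$-shift, which is incompatible with $\mu$ being a non-trivial finitary permutation. A primitive subgroup of $\FSym(\Z)$ containing a $3$-cycle contains $\FAlt(\Z)$ — the classical Jordan argument applies verbatim to finitary permutation groups — so $\FAlt(\Z)\le N\le G$, whence $(0\;1\;2)\in G$. The final assertion is then automatic, since $\{t^{-j}(0\;1\;2)t^j : j\in\Z\} = \{(j\;j+1\;j+2): j\in\Z\}$ generates $\FAlt(\Z)$. (In fact, since $\mu$ is an odd permutation by the proof of Lemma \ref{oddperm}, one even obtains $N = \FSym(\Z)$.)

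I expect the main obstacle to be the primitivity of $N$, specifically the exclusion of a block system into finite blocks. This is where the choice $s = t(4\;3\;2\;1)$ is really used: its three fixed points are what make $\mu = ft^{-1}$ move a point by exactly one step. Some such feature is essential — were $ft^{-1}$ to respect a non-trivial arithmetic-progression partition of $\Z$, then $N$ would lie inside a group of the form $\prod_r \FAlt(g\Z + r)$ and could not contain $(0\;1\;2)$, so the statement would be false.
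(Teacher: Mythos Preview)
Your opening is sound and matches the paper: you correctly extract $f=\alpha^{-1}s\alpha\in G$ with $\underline t(f)=\underline t(g_2)$, observe that $\mu=ft^{-1}\in\FSym(\Z)$ is non-trivial, and then invoke Lemma~\ref{3cycle} to land a $3$-cycle inside $N=G\cap\FSym(\Z)$. Your transitivity argument for $N$ and the exclusion of infinite blocks are also fine.

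The gap is in your exclusion of \emph{finite} blocks. You assert that, because $\mu$ preserves every translate $\mathcal B+j$ and their common refinement is the singleton partition, ``$\mu$ must be, locally, a $\pm1$-shift''. The first clause is correct (the refinement is $t$-invariant and hence trivial by the subgroup argument you allude to), but the second does not follow. Preserving a family of block systems with trivial common refinement does \emph{not} force a permutation to act as a shift or reflection; it only says that $\mu$ is determined by the induced permutations on the various quotient sets, which can be quite arbitrary finitary permutations of the blocks. Your observation that $\mu$ moves each fixed point of $f$ by exactly one step is the right ingredient, but you never actually connect it to a contradiction with a putative finite-block system. As written, the primitivity claim is unproved, so Jordan's theorem cannot be invoked. (Your final paragraph also drifts: arithmetic-progression partitions $\{g\Z+r\}$ have \emph{infinite} parts, which you already excluded, so they are not the obstruction in the finite-block case.)

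For comparison, the paper does not argue via primitivity at all. After producing a $3$-cycle it works directly with conjugation in $G$ rather than in $N$: it exploits that $s'$ (hence $f$) has exactly three fixed points and a \emph{single} infinite orbit on the complement. By conjugating $s'$ by a power of $t$ and then by a power of the resulting element, one manufactures a $G$-element $f_{a',\,c-1,\,c}$ whose fixed-point set contains two \emph{consecutive} integers; this and its $t$-shift are then used to conjugate the $3$-cycle, in three explicit steps, to $(c\;c+1\;c+2)$ and thence to $(0\;1\;2)$. That argument uses the transitivity of $f$ on $\Z\setminus\{a,b,c\}$ in an essential way --- something your proposal never calls on. If you want to rescue the primitivity route, you will need a genuine argument linking the ``moves by one'' property of $\mu$ (at three distinct places $a,b,c$) to the non-existence of a finite block system; alternatively, you can bypass primitivity by showing that $G$ is $3$-transitive on $\Z$, which is effectively what the paper's explicit conjugations establish.
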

\begin{proof} By Lemma \ref{oddperm}, $\langle t, s'\rangle$ contains an element of $\FSym(\Z)$, and so by conjugating this by a suitably large power of $t$ we see that $\langle t, s'\rangle$ contains an element $\sigma\in\FSym(\Z)$ so that Lemma \ref{3cycle} can be applied to $\sigma$ and $t$, meaning that $\langle t, s'\rangle$ contains a 3-cycle.

Let $a<b<c$ denote the fixed points of $s'$ (which has exactly 3 fixed points since cycle type is preserved by conjugacy in $\Sym(\Z)$ and $s$ has exactly 3 fixed points by contruction). If $b=c-1$, then set $a':=a$ and skip to the next paragraph. Otherwise, there exists a power, $k$, of $t$ such that $t^{-k}s't^k=:f_{c,d,e}$, an element with fixed points $c<d<e$ where $d-c=b-a$ and $e-d=c-b>1$. Next, conjugate $s'$ by a suitable power of $f_{c,d,e}$ to obtain $f_{a',c-1,c}$, an element with fixed points $a', c-1, c$. Such a power exists because $f_{c,d,e}$ acts transitively on the set $\Z\setminus\{c, d, e\}$ and $c-1\not\in\{c, d, e\}$.

We can assume that $a'\ne c+1$. If it did, we could replace $f_{a',c-1,c}$ with $tf_{a',c-1,c}t^{-1}$. Let $(x\;y\;z)$ denote the 3-cycle in $\langle t, s'\rangle$, where $x<y<z$. Now:
\begin{enumerate}[1.]
\item conjugate $(x\;y\;z)$ by a suitable power of $t$ to obtain $(c\;y'\;z')$ where $c<y'<z'$;
\item conjugate $(c\;y'\;z')$ by a suitable power of $f_{a',c-1,c}$ to obtain $(c\;c+1\;z'')$ where $z''\in\Z$;
\item conjugate $(c\;c+1\;z'')$ by a suitable power of $t^{-1}f_{a',c-1,c}t$ to obtain $(c\;c+1\;c+2)$. Note that $t^{-1}f_{a',c-1,c}t$ has fixed points $a'+1, c, c+1$ and acts transitively on $\Z\setminus\{a'+1, c, c+1\}$. Our assumption that $a'\ne c+1$ then means that $a'+1\ne c+2$.
\end{enumerate}
Finally, we can conjugate $(c\;c+1\;c+2)$ by a suitable power of $t$ to obtain $(0\;1\;2)$.
\end{proof}
\begin{lem} The group $H_2$ can be invariably generated by a set of size 2. In particular, $\langle t, s'\rangle=\FSym(\Z)\rtimes\langle t\rangle$.
\end{lem}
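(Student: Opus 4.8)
The plan is to deduce the statement directly from Lemmas \ref{oddperm} and \ref{cycle012} together with one elementary observation. By Remark \ref{keyremark}, to show that $\{t, s\}$ invariably generates $H_2$ it suffices to keep $t$ fixed and prove $\langle t, s'\rangle = H_2$ for an arbitrary conjugate $s'$ of $s$ in $H_2$. Since $t\in H_2$ and $s'\in H_2 = \FSym(\Z)\rtimes\langle t\rangle$, the inclusion $\langle t, s'\rangle\le H_2$ is automatic, so only the reverse inclusion needs proof.

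First I would apply Lemma \ref{cycle012} to obtain $\FAlt(\Z)\le\langle t, s'\rangle$, and Lemma \ref{oddperm} to obtain an element $\tau\in\langle t, s'\rangle$ with $\tau\in\FSym(\Z)\setminus\FAlt(\Z)$. The elementary observation is that $\FAlt(\Z)$ has index $2$ in $\FSym(\Z)$, so the only subgroup of $\FSym(\Z)$ properly containing $\FAlt(\Z)$ is $\FSym(\Z)$ itself; as $\langle\FAlt(\Z),\tau\rangle$ is such a subgroup, we get $\FSym(\Z)\le\langle t, s'\rangle$. Since also $t\in\langle t, s'\rangle$, it follows that $\langle t, s'\rangle\supseteq\langle\FSym(\Z), t\rangle = \FSym(\Z)\rtimes\langle t\rangle = H_2$, completing the reverse inclusion. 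As $s'$ was an arbitrary conjugate of $s$, the set $\{t, s\}$ invariably generates $H_2$; and since $H_2$ contains $\FSym(\Z)$ it is not cyclic, so $d(H_2) = 2$ and this is an invariable generating set of size $d(H_2)$, as needed for the case $n = 2$ of Theorem \ref{mainthmA}.

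I do not expect a real obstacle at this stage: the substantive work — controlling fixed points and supports through iterated conjugations — has already been carried out in Lemmas \ref{3cycle}, \ref{oddperm}, and \ref{cycle012}. The one point that warrants a sentence of care is logical rather than computational: one must make sure that those lemmas were established for an \emph{arbitrary} conjugate $s'$ (which they were, $s'$ having been introduced as an unspecified conjugate of $s$), so that the argument genuinely covers every legal replacement of $s$ in the definition of invariable generation.
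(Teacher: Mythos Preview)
Your proposal is correct and follows essentially the same approach as the paper: both combine Lemma \ref{cycle012} (giving $\FAlt(\Z)\le\langle t,s'\rangle$) with Lemma \ref{oddperm} (giving an odd permutation) to conclude $\FSym(\Z)\le\langle t,s'\rangle$. The only cosmetic difference is that the paper multiplies the odd permutation by a suitable element of $\FAlt(\Z)$ to produce an explicit $2$-cycle, then conjugates it to $(0\;1)$ and quotes $\langle t,(0\;1)\rangle=\FSym(\Z)\rtimes\langle t\rangle$, whereas you invoke the index-$2$ fact directly; these are equivalent one-line observations.
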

\begin{proof} We have that $\FAlt(\Z)\le \langle t, s'\rangle$ by Lemma \ref{cycle012}. Then, by Lemma \ref{oddperm}, $\langle t, s'\rangle$ also contains an element $\alpha\in\FSym(\Z)\setminus\FAlt(\Z)$. Hence there exists a $\beta\in\FAlt(\Z)$ such that $\alpha\beta$ is a 2-cycle, which we can conjugate to $(0\;1)$ using an element of $\FAlt(\Z)$. Finally, $\langle t, s'\rangle\ge \langle t, (0\;1)\rangle=\FSym(\Z)\rtimes\langle t\rangle$.
\end{proof}

\begin{rem*} A similar argument can be produced to show that $\langle t, u\rangle=\FSym(\Z)\rtimes \langle t\rangle$, where $u$ is a conjugate of $t(0\;1)$. This can be done by using $u$ and $t$ to produce an element with 2 fixed points.
\end{rem*}

For the rest of this section we will work with a fixed $n\in\{3, 4,\ldots\}$, and show that $H_n$ can be invariably generated by a set of size $n-1$. We start by introducing a set of size $n$ that will invariably generate $H_n$. This initial approach is similar to \cite{Wiegold2} in spirit, whereas the additional arguments for the smaller set are closer to the ideas used for $H_2$ above.

\begin{not*} Let $h:=g_ng_{n-1}\ldots g_2$, $\sigma:=(0\;1)$, and $S:=\{g_2, g_3,\ldots, g_{n-1}, h, \sigma\}$. For each $s\in S$, let $s'$ denote a conjugate of $s$ and let $S':=\{g_2', \ldots, g_{n-1}', h', \sigma'\}$. By using Remark \ref{keyremark}, we will assume that $h'=h$.
\end{not*}

\begin{defn} Two sets $\{x_2, \ldots, x_n\}$ and $\{y_2, \ldots, y_n\}$ are called \emph{translation equivalent} if $\underline{t}(x_i)=\underline{t}(y_i)$ for every $i \in \{2, \ldots, n\}$. Note for any $f_2,\ldots, f_n\in H_n$ that $\{f_2^{-1}x_2f_2,\ldots, f_n^{-1}x_nf_n\}$ is translation equivalent to $\{x_2, \ldots, x_n\}$ and that being translation equivalent is an equivalence relation.
\end{defn}

\begin{lem} \label{gen1} Let $X=\{x_2,\ldots, x_n\}$ be translation equivalent to $\{g_2, \ldots, g_n\}$. Then $X\cup \FSym(X_n)$ generates $H_n$.
\end{lem}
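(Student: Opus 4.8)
The plan is to show that $\langle X \cup \FSym(X_n)\rangle$ contains each of the original generators $g_2, \ldots, g_n$, which suffices since $H_n = \langle g_2, \ldots, g_n\rangle$. The key observation is that for each $i \in \{2, \ldots, n\}$, the element $x_i$ and $g_i$ have the same translation data, i.e. $\underline{t}(x_i) = \underline{t}(g_i)$. Since $H_n$ is precisely the set of ``eventually translating'' bijections of $X_n$ and the map $\underline{t}\colon H_n \to \Z^n$ is a homomorphism whose kernel is exactly $\FSym(X_n)$ (this is the content of the Lemma of \cite{ConjHou} recalled above, together with $\FSym(X_n) \le H_n$), we get that $g_i x_i^{-1} \in \ker \underline{t} = \FSym(X_n)$. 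Therefore $g_i = (g_i x_i^{-1}) x_i \in \FSym(X_n)\cdot\langle X\rangle \subseteq \langle X \cup \FSym(X_n)\rangle$ for every $i$, and hence $H_n = \langle g_2, \ldots, g_n\rangle \le \langle X \cup \FSym(X_n)\rangle \le H_n$.

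First I would record that $\underline{t}$ is a homomorphism: this follows directly from the definition, since if $g$ eventually translates $R_i$ by $t_i(g)$ and $h$ eventually translates $R_i$ by $t_i(h)$, then $gh$ eventually translates $R_i$ by $t_i(g) + t_i(h)$ (the finitely many exceptional points of $g$, and the $g$-preimages of the exceptional points of $h$, form a finite set). Next I would note $\ker\underline{t} = \FSym(X_n)$: one inclusion is immediate since finitely supported permutations have all $t_i = 0$; the reverse inclusion is exactly the statement that an element of $H_n$ with trivial translation data moves only finitely many points, which is \cite[Lem.\ 2.1]{ConjHou}. With these two facts in hand, the argument above is essentially a one-line diagram chase, applied coordinate-by-coordinate to conclude $g_i x_i^{-1} \in \FSym(X_n)$.

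I do not expect a serious obstacle here; the only mild subtlety is making sure the ambient group in which we compute is large enough. Strictly, $\underline{t}$ is defined on $H_n$, so I should first check that $X \cup \FSym(X_n) \subseteq H_n$ — which holds because each $x_i$ is an $H_n$-conjugate of $g_i$ and $\FSym(X_n) \le H_n$ — so that $\langle X \cup \FSym(X_n)\rangle$ is a subgroup of $H_n$ and all the translation-data computations take place inside $H_n$. After that the containment $g_i \in \langle X \cup \FSym(X_n)\rangle$ is purely formal.
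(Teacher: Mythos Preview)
Your argument is correct and is essentially the paper's own proof: both rest on the fact that $\underline{t}$ is a homomorphism with kernel $\FSym(X_n)$, so elements with the same translation data differ by a finitary permutation. The paper applies this to an arbitrary $g\in H_n$ via $g\prod_{i=2}^n x_i^{t_i(g)}\in\FSym(X_n)$, whereas you specialise to the generators $g_i$ (giving $g_ix_i^{-1}\in\FSym(X_n)$), which is the same computation at $g=g_i$. One small slip: translation equivalence does \emph{not} say that $x_i$ is an $H_n$-conjugate of $g_i$, only that $\underline t(x_i)=\underline t(g_i)$; but this is harmless, since $\underline t$ being defined already forces $x_i\in H_n$.
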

\begin{proof}
Let $G=\langle X\cup \FSym(X_n)\rangle$. Given an element $g \in H_n$, we note that $g\prod_{i=2}^nx_i^{t_i(g)}\in \FSym(X_n)$, and so lies in $G$. But then $x_2, \ldots, x_n \in G$, meaning that $g\in G$.
\end{proof}

\begin{not*} Let $d\in \N$. Then $R_1^{(d)}:=\{(1, m)\in R_1\;:\; m\ge d\}$.
\end{not*}

\begin{lem} \label{gen2} For every $d \in \N$ we have that $\FSym(X_n)\le\langle \FSym(R_1^{(d)}), h\rangle$.
\end{lem}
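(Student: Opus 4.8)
I would build everything from the very rigid dynamics of the single element $h=g_ng_{n-1}\cdots g_2$. Since each $g_k$ sends $(1,m)$ to $(1,m+1)$ and the $g_k$ have pairwise disjoint supports away from the ray $R_1$, reading off the definitions gives, for all $m\in\N$ and all $i\in\{2,\ldots,n\}$,
\[
(1,m)h=(1,m+n-1),\qquad (i,1)h=(1,i-1),\qquad (i,m)h=(i,m-1)\ \text{ for }m\ge 2,
\]
and correspondingly $(1,m)h^{-1}=(1,m-n+1)$ for $m\ge n$, while $(1,j)h^{-1}=(j+1,1)$ for $j\in\{1,\ldots,n-1\}$ and $(i,m)h^{-1}=(i,m+1)$ for $i\ge 2$, $m\ge 1$. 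The other ingredient is the elementary fact that conjugation by $g\in\Sym(X_n)$ carries a permutation with support $A$ to one with support $Ag$, so that $g^{-1}\FSym(A)g=\FSym(Ag)$; thus conjugating a subgroup of the form $\FSym(A)$ by a power of $h$ just transports the set $A$ along the orbits of $h$.

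Set $G:=\langle\FSym(R_1^{(d)}),h\rangle$. The first step I would carry out is to show $\FSym(R_1)\le G$. Pick $j\in\N$ with $j(n-1)\ge d-1$; then for every $\ell\in\N$ we have $(1,\ell)h^{j}=(1,\ell+j(n-1))\in R_1^{(d)}$, so $R_1\subseteq R_1^{(d)}h^{-j}$, and hence
\[
\FSym(R_1)\le\FSym\!\big(R_1^{(d)}h^{-j}\big)=h^{j}\,\FSym(R_1^{(d)})\,h^{-j}\le G.
\]

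The second step bootstraps from $R_1$ to all of $X_n$. Now that $\FSym(R_1)\le G$ and $h\in G$, for every $k\in\N$ we get $h^{k}\,\FSym(R_1)\,h^{-k}=\FSym(R_1h^{-k})\le G$. An easy induction on $k$, using the formulas for $h^{-1}$, shows
\[
R_1h^{-k}=R_1\cup\{(i,m)\;:\;2\le i\le n,\ 1\le m\le k\},
\]
since each application of $h^{-1}$ fixes $R_1$ setwise, moves every already-collected point $(i,m)$ with $i\ge 2$ to $(i,m+1)$, and adds the new points $(2,1),\ldots,(n,1)$ (the $h^{-1}$-images of $(1,1),\ldots,(1,n-1)$). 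Therefore $\bigcup_{k\in\N}R_1h^{-k}=X_n$, so any $\tau\in\FSym(X_n)$ has support inside $R_1h^{-k}$ for $k$ large enough, giving $\tau\in\FSym(R_1h^{-k})\le G$. Hence $\FSym(X_n)\le G$, which is the claim.

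I expect the only delicate point to be the bookkeeping of how the sets $R_1^{(d)}$ and $R_1$ move under iterated conjugation by $h$ — in particular the way the lowest $n-1$ points of $R_1$ ``overflow'' into the rays $R_2,\ldots,R_n$ under $h^{-1}$, and making sure the resulting set still contains all of $R_1$. Once the orbit formulas for $h$ above are in hand, both steps are short and essentially mechanical, so there is no serious obstacle.
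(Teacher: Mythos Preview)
Your proof is correct and is essentially the same as the paper's: both rest on the single observation that conjugation by powers of $h$ carries any finite subset of $X_n$ into $R_1^{(d)}$. The paper does this in one line by taking an arbitrary $\alpha\in\FSym(X_n)$ and noting that $h^{-k}\alpha h^k\in\FSym(R_1^{(d)})$ for $k$ large (for example $k=d+\max_i z_i(\alpha)$), whereas you argue in the dual direction---conjugating $\FSym(R_1^{(d)})$ outward until it exhausts $\FSym(X_n)$---with explicit orbit bookkeeping.
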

\begin{proof}
Let $\alpha \in \FSym(X_n)$. Then there exists a suitably large $k\in \N$ such that $h^{-k}\alpha h^k\in \FSym(R_1^{(d)})$, for example $k=d+\max\{z_i(\alpha)\;:\;i=1, \ldots, n\}$.
\end{proof}

The following allows us to replace $S'$ with one that is simpler to work with.

\begin{lem} \label{introT} There exists $\{h_2, h_3, \ldots, h_n, h, (a\;b)\} \subseteq \langle S'\rangle$, where:
\begin{enumerate}[i)]
\item $\{h_2, \ldots, h_n\}$ is translation equivalent to $\{g_2, \ldots, g_n\}$ and, for each $i\in \{2, \ldots, n\}$, that $\supp(h_i)\subseteq R_1\cup R_i$ and $(i,m)h_i=(i,m)g_i$ for all $m\in \N$;
\item $h=g_ng_{n-1}\ldots g_2$, as above; and
\item $a, b \in \supp(h_2)\cap\supp(h_3)$.
\end{enumerate}
\end{lem}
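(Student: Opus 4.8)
The plan is as follows. By Remark \ref{keyremark} we assume $h' = h$, so $h \in \langle S'\rangle$; and since $\underline{t}\colon H_n \to \Z^n$ is a homomorphism into an abelian group, conjugation in $H_n$ fixes $\underline{t}$, so each $g_i'$ has $\underline{t}(g_i') = \underline{t}(g_i)$, while $\sigma'$ has $\underline{t}(\sigma') = \underline{0}$ and is therefore a single transposition. The first step is to push $\sigma'$, and a $3$-cycle produced from Lemma \ref{3cycle}, out to the far part of $R_1$: because $t_1(h) = n-1 > 0$ and $h$ eventually carries every point of $X_n$ into $R_1$, conjugating $\sigma'$ by a sufficiently high power of $h$ yields, for any prescribed $d$, a transposition $\tau \in \langle S'\rangle$ with $\supp(\tau) \subseteq R_1^{(d)}$; fix such a $\tau$ with $d$ larger than $z_1(h)$ and than all $z_1((g_i')^{\pm 1})$. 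Since $\underline{t}(g_2') = \underline{t}(g_2)$ and $\supp(\tau) \subseteq \{(1,m):m \ge z_1(g_2')\}$, Lemma \ref{3cycle} applied to $f = g_2'$ and $\sigma = \tau$ produces a $3$-cycle $\theta \in \langle S'\rangle$; conjugating again by a power of $h$, we may assume also $\supp(\theta) \subseteq R_1^{(d)}$.

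\textbf{Constructing $h_2,\dots,h_{n-1}$ (the crux).} Fix $i \in \{2,\dots,n-1\}$. As $g_i'$ and $g_i$ are eventual translations realising the same vector $\underline{t}(g_i)$, they agree off a finite set, so $g_i' = \varphi_i g_i$ for some $\varphi_i \in \FSym(X_n)$ supported in a bounded region near the bases of the rays. I want $h_i$ of the shape $g_i \eta_i$ with $\eta_i \in \FSym(R_1)$ supported arbitrarily far out in $R_1$; any such element lies in $R_1 \cup R_i$, satisfies $(i,m)h_i = (i,m)g_i$ for all $m$, and has $\underline{t}(h_i) = \underline{t}(g_i)$. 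Conjugating by $h^k$ turns $g_i'$ into $h^{-k}g_i'h^k = \varphi_i^{(k)}\,(h^{-k}g_ih^k)$ with $\varphi_i^{(k)} = h^{-k}\varphi_i h^k$ now finitary and supported far out in $R_1$; pairing this against $h^{-k}h_i h^k = (h^{-k}g_ih^k)(h^{-k}\eta_i h^k)$ shows that $h_i \in \langle S'\rangle$ provided a single, explicit finitary permutation of $R_1^{(d)}$ — built from $\varphi_i^{(k)}$ and our choice of $\eta_i$ — lies in $\langle S'\rangle$. This last permutation is manufactured from $\tau$, $\theta$ and their conjugates by powers of $g_2'$ (which acts on $R_1^{(z_1(g_2'))}$ as the shift $(1,m)\mapsto(1,m+1)$), exploiting the latitude in the choice of $\eta_i$.

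\textbf{Obtaining $h_n$ and the transposition.} Set $h_n := h\, h_2^{-1}h_3^{-1}\cdots h_{n-1}^{-1} \in \langle S'\rangle$. Since $\underline{t}$ is a homomorphism and $h = g_n g_{n-1}\cdots g_2$, we get $\underline{t}(h_n) = \underline{t}(h) - \sum_{i=2}^{n-1}\underline{t}(g_i) = \underline{t}(g_n)$. Writing $h_i = g_i\eta_i$ ($2 \le i \le n-1$) with each $\eta_i \in \FSym(R_1)$ supported far enough out, one expands the product: each $\eta_i$ is conjugated only by products of the $g_j$, which send $R_1$ into $R_1$, so $h_n = g_n\zeta$ for some $\zeta \in \FSym(R_1)$ which — by choosing the $\eta_i$ supported beyond the relevant threshold — fixes $(1,1)$; hence $\supp(h_n) \subseteq R_1 \cup R_n$ and $(n,m)h_n = (n,m)g_n$ for all $m$, giving (i) and (ii). Finally, $h_2$ and $h_3$ agree with $g_2$ and $g_3$ on $R_2$ and $R_3$, so $\supp(h_2) = R_1 \cup R_2$ and $\supp(h_3) = R_1 \cup R_3$, whence $\supp(h_2) \cap \supp(h_3) = R_1$, and taking $(a\;b) := \tau$ — whose support lies in $R_1$ — gives (iii).

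\textbf{Where the difficulty lies.} All the real work is in the middle step. Because $\sigma'$ and the $g_i'$ are arbitrary conjugates, the transposition $\tau$ and the $3$-cycle $\theta$ come with an uncontrolled ``gap'' in $R_1$ that conjugation by $g_2'$ only translates; in particular one cannot simply generate $\FSym(R_1^{(d)})$ (an adversarial $\sigma'$ of even gap already obstructs this), so the argument must produce exactly the bounded finitary correction needed to absorb $\varphi_i^{(k)}$ and no more. The device that makes this possible is the asymmetry of $h$ — it advances $R_1$ at rate $n-1$ but each other ray at rate $1$ — so that pushing a finitary permutation out along some $g_j'$ and back in along $h$ alters its support in a prescribed way; combined with the freedom in choosing each $\eta_i$, this produces the required permutations. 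Making these manipulations precise, and checking the bookkeeping near the bases of the rays, is the one genuinely delicate part of the proof.
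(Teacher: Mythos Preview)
Your approach has a genuine gap, and it stems from a misreading of what the lemma asks for. You insist on producing each $h_i$ in the exact form $g_i\eta_i$ with $\eta_i\in\FSym(R_1)$, and then try to manufacture the finitary correction from $\tau$, $\theta$ and shifts --- an argument you yourself leave incomplete and label ``genuinely delicate''. But no such correction is needed. The paper's proof is three lines: for each $j$ one simply sets $h_j:=h^{-k_j}g_j'h^{k_j}$ for $k_j$ large (with $g_n'$ built from $h$ and the $(g_i')^{-1}$, $i<n$). The computation you are missing is that $h^{-1}=g_2^{-1}\cdots g_n^{-1}$ acts on every ray $R_i$ with $i\ne1$ as the \emph{exact} shift $(i,m)\mapsto(i,m+1)$ for all $m\ge1$, not merely eventually. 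Hence for $i\ne1,j$ and any $m$, the point $(i,m)$ is sent by $h^{-k_j}$ to $(i,m+k_j)$, fixed by $g_j'$ once $k_j\ge z_i(g_j')$, and returned by $h^{k_j}$ to $(i,m)$; so $\supp(h_j)\subseteq R_1\cup R_j$ automatically. The same calculation on $R_j$ gives $(j,m)h_j=(j,m-1)$ for $m\ge2$ directly. Conjugating $\sigma'$ by a large power of $h$ produces the required $2$-cycle with support in $R_1^{(d)}\subset\supp(h_2)\cap\supp(h_3)$.

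Your ``where the difficulty lies'' paragraph conflates this lemma with the one after it. Generating $\FSym(R_1^{(d)})$ is indeed the substantive step --- but that is the content of the \emph{next} lemma, which uses the $h_j$ produced here to carry it out. Lemma~\ref{introT} itself is purely a normalisation: push the finitary part of each $g_j'$ onto $R_1$ by conjugating with $h$. There is no delicate bookkeeping, and Lemma~\ref{3cycle} is not invoked at this stage.
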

\begin{proof} By definition, $h \in S'$. Let $g_n':=hg_2^{-1}\ldots g_{n-1}^{-1}$. For the 2-cycle in $S'$ we can, by choosing a suitably large $k$, conjugate it by $h^k$ to obtain a 2-cycle arbitrarily far along the first branch of $X_n$. Similarly, for each $j\in\{2,\ldots, n\}$, there exists $k_j\in \N$ such that conjugating $g_j'$ by $h^{k_j}$ produces a suitable choice for $h_j$.
\end{proof}

The idea to prove the following is to show that we can conjugate the 2-cycle in $S'$ to an element $\alpha$ such that $\langle h_2, \alpha\rangle \cong H_2$.
\begin{lem} There exists a $d\in \N$ such that $\FSym(R_1^{(d)})\le \langle S'\rangle$.
\end{lem}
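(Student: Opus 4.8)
The plan is to reduce, via Lemma \ref{introT}, to the concrete generating set $\{h_2,\dots,h_n,h,(a\;b)\}\subseteq\langle S'\rangle$, and then to exhibit one conjugate of $(a\;b)$ which, together with $h_2$, already generates a copy of $\FSym(R_1^{(d)})$. Since $\supp(h_2)\subseteq R_1\cup R_2$ and $\supp(h_3)\subseteq R_1\cup R_3$, the condition $a,b\in\supp(h_2)\cap\supp(h_3)$ forces $a,b\in R_1$; and using the freedom in Lemma \ref{introT} (conjugating the $2$-cycle by a further power of $h$) I may take $a=(1,m_a)$, $b=(1,m_b)$ far enough out along $R_1$ to lie in the regions where both $h_2$ and $h_3$ act as $(1,m)\mapsto(1,m+1)$; relabelling if needed, $m_a<m_b$, and put $\delta:=m_b-m_a\ge 1$. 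I would also record: (i) $h_2$ is conjugate to $g_2$, hence a single bi-infinite cycle on $\Omega:=\supp(h_2)$, and since $h_2$ eventually translates $R_1$ by $+1$ the difference $R_1\setminus\Omega$ is finite, so $R_1^{(d)}\subseteq\Omega$ for all large $d$; (ii) by Lemma \ref{introT}(i), $h_2|_{R_2}=g_2|_{R_2}$ and $h_3|_{R_3}=g_3|_{R_3}$, so $(1,1)=(2,1)h_2\in\Omega$, while going backwards along the $h_3$-cycle from $(1,1)$ one successively meets $(3,1),(3,2),(3,3),\dots$.

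The heart of the argument, matching the hint, is to conjugate $(a\;b)$ inside $\langle S'\rangle$ to the transposition $\beta:=\big((3,\delta)\;(1,1)\big)$, which has one point in $\Omega$ and the other in $R_3$ (hence fixed by $h_2$). Indeed $a$ and $b$ lie in the eventual-translation part of the $h_3$-cycle with $b$ exactly $\delta$ steps ahead of $a$, so there is an integer $k$ with $bh_3^{k}=(1,1)$, and then $ah_3^{k}$ is the point $\delta$ steps behind $(1,1)$ in that cycle, namely $(3,\delta)$; thus conjugating $(a\;b)$ by the corresponding power of $h_3$ produces $\beta\in\langle S'\rangle$.

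Finally, since $h_2$ acts on $\Omega$ as a single bi-infinite cycle and fixes $(3,\delta)$, the conjugates $h_2^{-i}\beta h_2^{i}$, $i\in\Z$, are exactly the transpositions $\big((3,\delta)\;\omega\big)$ for $\omega\in\Omega$, and these generate $\FSym\big(\Omega\cup\{(3,\delta)\}\big)$. Hence $\FSym(R_1^{(d)})\le\FSym(\Omega)\le\langle h_2,\beta\rangle\le\langle S'\rangle$ whenever $R_1^{(d)}\subseteq\Omega$, which is the statement. (If one wants the literal form of the hint, note that each $\big(\omega\;\omega h_2\big)$ now lies in $\langle S'\rangle$, and $\langle h_2,(\omega\;\omega h_2)\rangle\cong\FSym(\Omega)\rtimes\langle h_2\rangle\cong H_2$ by the arguments already used for $H_2$.)

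The main obstacle is the bookkeeping in the second paragraph: one must check that, after $(a\;b)$ has been pushed far out along $R_1$, its two points really are controlled by the eventual-translation behaviour of $h_3$ — so that $h_3^{k}$ lands $b$ on $(1,1)$ and $a$ on $(3,\delta)$ as claimed — and that $(3,\delta)\notin\supp(h_2)$; both rest on the precise support data packaged in Lemma \ref{introT}. Everything after that is the standard fact that a bi-infinite cycle together with one transposition to an external fixed point generates the finitary symmetric group on the whole set.
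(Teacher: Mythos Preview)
Your argument is correct. Both you and the paper begin from Lemma \ref{introT} and finish by showing that a well-placed $2$-cycle together with $h_2$ generates $\FSym$ of a set containing some $R_1^{(d)}$; the difference is in how the $2$-cycle is positioned. The paper conjugates $(a\;b)$ in three steps (by powers of $h_3$, $h_2$, $h_3$) to $((2,1)\;(3,1))$, then by a power of $h$ to an \emph{adjacent} pair $((1,d)\;(1,d+1))$ on $R_1$, after which $\langle h_2,((1,d)\;(1,d+1))\rangle\cong H_2$ gives $\FSym(\supp(h_2))$. You instead conjugate $(a\;b)$ by a single power of $h_3$ to $((3,\delta)\;(1,1))$, a transposition with one foot in $\Omega=\supp(h_2)$ and the other in $R_3$ (hence fixed by $h_2$); the ``pivot on an external fixed point'' trick then produces all $((3,\delta)\;\omega)$, $\omega\in\Omega$, directly. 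Your route is a little shorter and avoids the detour through $h$, at the cost of the bookkeeping you flag (that $a,b$ sit $\delta$ apart on the $h_3$-cycle, and that $(3,\delta)h_3^{\delta}=(1,1)$ because $h_3$ agrees with $g_3$ on $R_3$); the paper's route is slightly more uniform with the $H_2$ argument already in hand.
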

\begin{proof} It is sufficient to show that $\FSym(R_1^{(d)})\le \langle T\rangle$ for some $d\in \N$, where $T$ is a set $\{h_2, \ldots, h_n, h, (a\;b)\}\subset\langle S'\rangle$ satisfying conditions (i), (ii), and (iii) of Lemma \ref{introT}. We first use elements of $\langle T\rangle$ to conjugate $(a\;b)$ to a 2-cycle with support equal to $\{(2, 1), (3, 1)\}$.
\begin{enumerate}[1.]
\item From our choice of $a, b$, we may conjugate $(a\;b)$ by a suitably large power of $h_3$ to obtain $(x\;y)$ where $x\in\supp(h_2)$ but $y\not\in\supp(h_2)$.
\item Conjugate $(x\;y)$ by a power of $h_2$ to move $x$ to $(2,1)$. Note that $(2,1)\not\in\supp(h_3)$.
\item Conjugate $((2,1)\;y)$ by a power of $h_3$ to move $y$ to $(3,1)$.
\end{enumerate}
Conjugate $((2, 1), (3, 1))$ by a suitable power of $h$ to obtain $((1, d)\;(1, d+1))$, where $d\ge z_1(h_2)$. Then $\langle ((1,d)\;(1,d+1)), h_2\rangle=\FSym(\supp(h_2))\rtimes h_2$  which contains, by construction, $\FSym(R_1^{(d)})$.
\end{proof}

We have therefore shown, for $n\in\{2, 3, \ldots\}$, that $H_n$ is FIG. Our final aim in this section is to show, for any $n\in\{3, 4,\ldots\}$, that $H_n$ can be invariably generated by $\{g_2,\ldots, g_{n-1}, h\}$. We start by producing a version of Lemma \ref{oddperm} for $H_n$ where $n\in\{3, 4,\ldots\}$.

\begin{not*} Given a group $G$ and $g, h\in G$, let $[g,h]:=ghg^{-1}h^{-1}$.
\end{not*}

\begin{lem}\label{oddperm2} Given any $k\in\N$, the group $\langle S'\setminus\{\sigma'\}\rangle$ contains an element from $\FSym(R_1^{(k)})\setminus\FAlt(R_1^{(k)})$.
\end{lem}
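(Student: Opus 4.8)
The plan is to exhibit a single odd finitely-supported permutation inside $\langle S'\setminus\{\sigma'\}\rangle$ and then, given $k$, to push its support into $R_1^{(k)}$ by conjugating with a large power of $h$, exactly as in the proof of Lemma \ref{gen2}. Recall that $S'\setminus\{\sigma'\}=\{g_2',\ldots,g_{n-1}',h\}$, since we are assuming $h'=h$. If $n\ge 4$ the conjugates $g_2'$ and $g_3'$ already lie in this set; if $n=3$ we set instead $g_3':=h(g_2')^{-1}\in\langle S'\setminus\{\sigma'\}\rangle$. In either case $\underline{t}(g_2')=\underline{t}(g_2)$ and $\underline{t}(g_3')=\underline{t}(g_3)$: for $g_2'$ (and for $g_3'$ when $n\ge4$) because conjugation preserves $\underline{t}$, and for $g_3'$ when $n=3$ because $\underline{t}(h)=\underline{t}(g_3)+\underline{t}(g_2)$.

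The heart of the argument is that $w':=[g_2',g_3']$ is odd. Being a commutator of elements of $H_n$, $w'$ has trivial translation vector and so lies in $\FSym(X_n)$. To determine its parity, I would use that the sign homomorphism on $\FSym(X_n)$ is invariant under conjugation by any element of $H_n$ (conjugation in $\Sym(X_n)$ preserves cycle type), so that $\FSym(X_n)/\FAlt(X_n)$ is central in $H_n/\FAlt(X_n)$. It follows that whenever $a,b,c,d\in H_n$ satisfy $\underline{t}(a)=\underline{t}(c)$ and $\underline{t}(b)=\underline{t}(d)$ — so that $a$ and $c$, and $b$ and $d$, differ by elements of $\FSym(X_n)$, i.e.\ by central elements of $H_n/\FAlt(X_n)$ — the commutators $[a,b]$ and $[c,d]$, both of which lie in $\FSym(X_n)$, have the same sign. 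Applying this with $(a,b)=(g_2',g_3')$ and $(c,d)=(g_2,g_3)$, we get that $w'$ has the same sign as $[g_2,g_3]=((2,1)\;(3,1))$ (the transposition whose existence underlies $\FSym(X_n)\le H_n$), hence $w'\in\FSym(X_n)\setminus\FAlt(X_n)$, with $w'\in\langle S'\setminus\{\sigma'\}\rangle$.

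To finish, fix $k\in\N$. By the argument in the proof of Lemma \ref{gen2} applied to the element $w'\in\FSym(X_n)$, there is an $m\in\N$ with $h^{-m}w'h^m\in\FSym(R_1^{(k)})$; since $h^{-m}w'h^m$ has the same cycle type as $w'$, it is again odd, and thus lies in $\FSym(R_1^{(k)})\setminus\FAlt(R_1^{(k)})$. As $h$ and $w'$ lie in $\langle S'\setminus\{\sigma'\}\rangle$, so does $h^{-m}w'h^m$, completing the proof. I expect the only real obstacle to be the parity computation in the second paragraph: because $g_2'$ and $g_3'$ may be conjugated by different elements of $H_n$, $w'$ need not itself be a conjugate of $[g_2,g_3]$, and it is precisely the centrality of $\FSym(X_n)/\FAlt(X_n)$ in $H_n/\FAlt(X_n)$ that reduces the sign of $w'$ to the conjugation-invariant data $\underline{t}(g_2'),\underline{t}(g_3')$ and hence to that of $[g_2,g_3]$.
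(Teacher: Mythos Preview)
Your proof is correct and follows the same route as the paper: form $[g_2',g_3']$ (manufacturing $g_3'$ from $h$ and $(g_2')^{-1}$ when $n=3$), show it lies in $\FSym(X_n)\setminus\FAlt(X_n)$, and then conjugate by a large power of $h$ to push the support into $R_1^{(k)}$. The only cosmetic difference is in the parity step: the paper writes $g_j'=\gamma_j g_j$ with $\gamma_j\in\FSym(X_n)$ and appeals to ``standard commutator identities'', which is precisely your centrality-of-$\FSym(X_n)/\FAlt(X_n)$ observation unpacked by hand.
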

\begin{proof} We know that $t(g_j')=t(g_j)$ for every $j\in \{2,\ldots, n\}$. So $g_j'g_j^{-1}\in\FSym(X_n)$, meaning $g_j'=\gamma_jg_j$ for some $\gamma_j\in\FSym(X_n)$. By using standard commutator identities, we see that $[\gamma_2g_2, \gamma_3g_3]\in\FSym(X_n)\setminus\FAlt(X_n)$. Now, given any $k\in \N$, conjugate $[\gamma_2g_2, \gamma_3g_3]$ by a suitably large power of $h$ to produce an element in $\FSym(R_1^{(k)})\setminus\FAlt(R_1^{(k)})$.
\end{proof}

\begin{lem} The set $\{g_2,\ldots, g_{n-1}, h\}$ invariably generates $H_n$.
\end{lem}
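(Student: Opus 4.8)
The plan is as follows. By Remark~\ref{keyremark} we may assume $h'=h$; write $G:=\langle g_2',\ldots,g_{n-1}',h\rangle\le H_n$. First observe that $g_n'':=h(g_2')^{-1}(g_3')^{-1}\cdots(g_{n-1}')^{-1}\in G$ has $\underline{t}(g_n'')=\underline{t}(h)-\sum_{i=2}^{n-1}\underline{t}(g_i)=\underline{t}(g_n)$, so $\{g_2',\ldots,g_{n-1}',g_n''\}\subseteq G$ is translation equivalent to $\{g_2,\ldots,g_n\}$; hence by Lemma~\ref{gen1} it is enough to show $\FSym(X_n)\le G$, and by Lemma~\ref{gen2} it suffices to produce a single $d\in\N$ with $\FSym(R_1^{(d)})\le G$.

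Next, extract the finitary ingredients. Lemma~\ref{oddperm2} (with $k=z_1(g_2')$) gives an odd permutation $\beta\in G$ with $\supp(\beta)\subseteq R_1^{(z_1(g_2'))}$, and then Lemma~\ref{3cycle}, applied to $f=g_2'$ and $\sigma=\beta$, puts a $3$-cycle $\tau$ with $\supp(\tau)\subseteq R_1$ into $G$. Conjugation by $g_2'$ translates the far part of $R_1$ by $\pm 1$, so for $d$ large enough I may assume $\supp(\tau)\cup\supp(\beta)\subseteq R_1^{(d)}$ and that $G$ contains every translate of $\tau$ (and of $\beta$) whose support lies in $R_1^{(d)}$.

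The heart of the argument is to deduce $\FAlt(R_1^{(d')})\le G$ for some $d'\ge d$. Identifying $R_1^{(d)}$ with $\N$, the translates of $\tau$ generate a subgroup $G_0\le\FAlt(R_1^{(d)})$; I would show $\langle G_0,\beta\rangle$ is transitive and then primitive on $R_1^{(d')}$ for a suitable $d'$. The only block systems invariant under every translate of a fixed $3$-cycle are the congruences $m\equiv m'\pmod e$ (where $e$ is the g.c.d.\ of the ``gaps'' of $\tau$), and $\beta$ rules these out: being odd, it does not lie in the corresponding imprimitive subgroup (the product of the alternating groups on the residue classes), and after modifying it by a suitable element of $G_0$ one obtains an element of $G$ moving points between residue classes. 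By the classical fact that a primitive subgroup of $\Sym(\Omega)$, with $\Omega$ infinite, containing a finitary $3$-cycle contains $\FAlt(\Omega)$, we get $\FAlt(R_1^{(d')})\le G$. Finally, enlarging $d'$ so that $\supp(\beta)\subseteq R_1^{(d')}$, choose $\gamma\in\FAlt(R_1^{(d')})$ with $\beta\gamma$ a transposition; then $\FSym(R_1^{(d')})=\langle\FAlt(R_1^{(d')}),\beta\gamma\rangle\le G$, which finishes the proof by the reduction in the first paragraph.

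I expect the primitivity step to be the main obstacle: one must rule out that the finitary part $G\cap\FSym(X_n)$ — a normal subgroup of $G$ — is trapped inside a wreath‑type (imprimitive) subgroup of $\FSym(X_n)$, and this has to be argued uniformly over all admissible choices of the conjugates $g_2',\ldots,g_{n-1}'$. The levers for this are the freedom in the choice of conjugating power inside Lemma~\ref{3cycle} (to force, where possible, a $3$-cycle with coprime gaps, so that no nontrivial congruence survives) together with the parity of $\beta$ and the pieces of $\FAlt$ already produced; alternatively one can try to mimic Lemma~\ref{cycle012} by manufacturing, inside $G$, an element with three controllable fixed points on $R_1$ acting transitively on the rest, but this meets essentially the same difficulty.
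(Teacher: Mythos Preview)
Your opening reductions (building $g_n''$ for translation equivalence, then reducing via Lemmas~\ref{gen1} and~\ref{gen2} to $\FSym(R_1^{(d)})\le G$) and your use of Lemmas~\ref{oddperm2} and~\ref{3cycle} to place an odd finitary element $\beta$ and a $3$-cycle $\tau$ in $G$ are correct and match the paper.

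The gap you flag is genuine, and the argument you sketch for it is wrong, not merely incomplete. Oddness of $\beta$ does \emph{not} exclude the congruence block systems: a single transposition inside one residue class modulo $e$ is odd yet fixes every class setwise, so from $\beta\notin\prod_i\FAlt(\text{class}_i)$ you cannot conclude that $\beta$, or any product $\beta\gamma$ with $\gamma\in G_0$, moves a point between classes. Nothing in the construction of $\beta$ in Lemma~\ref{oddperm2} controls which residue classes its support meets, and the freedom in Lemma~\ref{3cycle} lets you shift only one point of the resulting $3$-cycle, which need not make the two gaps coprime. So neither of your proposed levers actually closes the primitivity step.

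The paper avoids primitivity altogether with a constructive device. After producing $h_2,\ldots,h_n\in G$ as in Lemma~\ref{introT}, it conjugates $h_2$ by a suitable $h_2^ah_3^{-b}$ to obtain an element $f_2\in G$ with two \emph{adjacent} fixed points $(2,1),(2,2)$ on $R_2$. It then shows recursively that from a single $3$-cycle $((2,r)\;(2,r+d)\;(2,r+e))\in G$ one obtains $\gamma_k=((2,r)\;(2,r+d)\;(2,r+ke))\in G$ for every $k\in\N$; for $k$ large the third entry lies beyond $z_2(f_2)$, and alternating conjugation by powers of $f_2$ and $h_2$ collapses $\gamma_k$ to a consecutive $3$-cycle $((2,s)\;(2,s+1)\;(2,s+2))$. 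This yields $\FAlt(R_1^{(z)})\le G$ directly, and the odd element from Lemma~\ref{oddperm2} then upgrades it to $\FSym(R_1^{(z)})$ exactly as in your final step.
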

\begin{proof} For convenience let $G:=\langle S'\setminus\{\sigma'\}\rangle=\langle g_2',\ldots,g_{n-1}', h\rangle$. We will show that $G=H_n$. Applying the argument of Lemma \ref{introT} we see that we can construct elements $h_2, \ldots, h_n$ in $G$ enjoying the properties stated in the lemma. We therefore only need show that $G$ contains a 2-cycle to complete the proof. Taking a similar approach to the $H_2$ case, we will show that $\FAlt(X_n)\le G$ and then apply Lemma \ref{oddperm2} to obtain a 2-cycle.

Let $z:=\max\{z_1(h_2), z_1(h_3)\}$ and choose $a, b\in\N$ so that $(2, 2)h_2^a=(3, 2)h_3^b=(1,z)$. Then $\alpha:=h_2^ah_3^{-b}$ sends $(2,1)$ to $(3,1)$ and $(2, 2)$ to $(3,2)$, as well as potentially moving other points in $X_n$. Let $f_2$ denote the result of conjugating $h_2$ by $\alpha$, and note that $(2,1), (2,2)\not\in\supp(f_2)$. Let $r$ and $s$ denote the largest natural numbers such that $(2,r)\not\in\supp(f_2)$ and $(2, s), (2, s+1)\not\in\supp(f_2)$.

Using Lemma \ref{oddperm2} and then Lemma \ref{3cycle} we see that $G$ contains a 3-cycle, which we will denote by $\beta$. By conjugating $\beta$ by a suitably large power $k$ of $h$, we have that $\supp(h^{-k}\beta h^k)\subset R_1^{(z)}$, and we may then conjugate this 3-cycle by a suitably power of $h_2$ so that it has support $(2, r), (2, r+d), (2, r+e)$ where $r, d, e\in\N$ and $e>d$. By potentially using the inverse of this element, we can state that $\gamma_1:=((2, r)\;(2,r+d)\;(2,r+e))\in G$. Our key observation is now that $G$ contains, for every $k\in\N$, the element $\gamma_k:=((2,r)\;(2,r+d)\;(2,r+ke))$. To show this, for each $j\in\N$, let $\delta_j:=h_2^{je}\gamma_1^{-1}h_2^{-je}=((2,r+je)\;(2,r+(j+1)e)\;(2, r+d+je))$. We can then produce the elements $\{\gamma_k\;:\;k\in\N\}$ recursively using that $\delta_j^{-1}\gamma_j\delta_j=\gamma_{j+1}$ for every $j\in\N$.

By construction $(2, r+1), (2,r+d)\in\supp(f_2)$. We apply the following process to $\gamma_c$ and use that $c\in\N$ can be chosen to be arbitrarily large so that $r+ce$, $p_c$, and $q_c$ all exceed $z_2(f_2)$.
\begin{enumerate}[1.]
\item Conjugate $\gamma_c$ by a suitable power of $f_2$ to obtain $((2, r)\;(2,r+1)\;(2, p_c))=:\gamma_c'$.
\item Conjugate $\gamma_c'$ by a suitable power of $h_2$ to obtain $((2,s)\;(2,s+1)\;(2,q_c))=:\gamma_c''$. Note that $(2, s), (2, s+1)\not\in\supp(f_2)$ whereas $(2, q_c)\in R_2^{(z_2(f_2))}\subset \supp(f_2)$ by construction.
\item Conjugate $\gamma_c''$ by a suitable power of $f_2$ to obtain $((2,s)\;(2,s+1)\;(2,s+2))$.
\end{enumerate}
Now $G\ge \langle h_2, ((2,s)\;(2,s+1)\;(2,s+2))\rangle\ge \FAlt(R_1^{(z)})$. By Lemma \ref{oddperm2}, $G$ also contains an element in $\FSym(R_1^{(z)})\setminus\FAlt(R_1^{(z)})$. Multiplying this by a well chosen element in $\FAlt(R_1^{(z)})$ yields a 2-cycle $(a\;b)$ where $a, b\in R_1^{(z)}$ which is then $\FAlt(R_1^{(z)})$-conjugate to $((1,z)\;(1,z+1))$. Hence $\FSym(R_1^{(z)})\le G$ and Lemma \ref{gen2} and Lemma \ref{gen1} together imply that $G=H_n$.
\end{proof}

This concludes our proof of Theorem \ref{mainthmA}.

\section{Invariable generation for finite index subgroups of $H_n$}

Our aim is now to prove that the groups $U_v\le H_n$ are FIG in order to prove Theorem \ref{mainthmB}. We start by fixing some notation.

\begin{not*} Fix an $n \in \{2, 3, \ldots\}$ and a $v \in 2\N$ with $v\ne2$ (which we impose since then all 3-cycles in $A_{2v}$ are conjugate in $A_{2v}$). Throughout this section we work with $U_v\le H_n$ where $U_v:=\langle g_2^v,\ldots,g_n^v, \FAlt(X_n)\rangle$.
\end{not*}

We now introduce the set $S$ that we will prove invariably generates $U_v$. Some of the elements included may at this stage appear mysterious, but the motivation behind our choice for $S$ will be made clear as the argument unfolds.

\begin{not*} Let $S:=\{g_2^v, \ldots, g_{n-1}^v, h, \sigma_1, g_2^{2v}\sigma_1, g_2^{2v}\sigma_2\}$ where $h:=g_n^vg_{n-1}^v\ldots g_2^v$, $\sigma_1:=((1, 1)\;(1,2)\;(1,3))$, and $\sigma_2:=((1,1)\;(1,2))((1,3)\;(1,4)\;\ldots\;(1,2v))$. For each $s\in S$, let $s'$ denote a conjugate of $s$, and let $S':=\{s'\;:\;s\in S\}$. Remark \ref{keyremark} allows us to assume that $h'=h$.
\end{not*}

\begin{not*} Let $\Omega:=\{(1,1), (1,2),\ldots, (1,2v)\}$. Note that $\langle \sigma_1, \sigma_2\rangle=\FAlt(\Omega)$, where $\sigma_1$ and $\sigma_2$ are the elements introduced in the preceding notation.
\end{not*}

We now introduce notation for the elements in $S'$.

\begin{not*} For each $i\in \{2, \ldots, n-1\}$, let $u_i \in U_{v}$ denote an element such that $(g_i^v)'=u_i^{-1}g_i^vu_i$. Note that $\underline{t}(g_i^v)=\underline{t}((g_i^v)')$ implies that $\underline{t}(\langle S'\rangle)=\underline{t}(U_v)$. So there exist $w_2, \ldots, w_{n-1}\in \langle S'\rangle$ such that $\underline{t}(w_i)=\underline{t}(u_i)$ for each $i$. Now, for each $i$, let $h_i:=w_i(u_i)^{-1}g_i^vu_i(w_i)^{-1}$ and let $h_n:=h\prod_{i=2}^{n-1}(h_i^{-v})$. Then $h_2, \ldots, h_n, h \in \langle S'\rangle$ by construction, and also each $h_i$ is an $\FAlt(X_n)$-conjugate of $g_i^v$.
\end{not*}

\begin{rem} We have, for every $d\in\N$, that $\langle \FAlt(R_1^{(d)}), h_2, \ldots, h_n, h\rangle=U_v$ by running the same arguments used in Lemma \ref{gen1} and Lemma \ref{gen2}.
\end{rem}

\begin{not*} Let $\Omega_{S'}:=\{(1, p+1),(1, p+2), \ldots, (1, p+2v)\}$ where $p$ is the smallest non-negative integer satisfying both $p\equiv 0 \bmod(2v)$ and $p\ge z_1(h_2)$.
\end{not*}

\begin{lem} \label{reducetoomega} Let $X:= \FAlt(\Omega_{S'})\cup\{h_2, \ldots, h_n, h\}$. Then $\langle X\rangle =U_v$.
\end{lem}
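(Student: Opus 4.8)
The plan is to show that $\langle X\rangle$ contains $\FAlt(R_1^{(d)})$ for some $d$, since then the Remark preceding the lemma (which lets us generate $U_v$ from $\FAlt(R_1^{(d)})$ together with the $h_i$ and $h$) finishes the job. The obstruction is that a priori $\langle X\rangle$ only knows about the finite alternating group $\FAlt(\Omega_{S'})$ on the $2v$ points $(1,p+1),\dots,(1,p+2v)$, plus the ``eventual translation'' elements $h_2,\dots,h_n,h$; we must bootstrap from this finite piece to an infinite alternating group on a ray. The key tool is that $\supp(h_2)$ eventually coincides with a translation by $v$ along $R_1$ (since $\underline t(h_2)=\underline t(g_2^v)$, so $t_1(h_2)=v$), and $p$ was chosen precisely so that $p\equiv 0\bmod 2v$ and $p\ge z_1(h_2)$ — meaning $h_2$ acts on $\{(1,m):m\ge p\}$ as the shift $(1,m)\mapsto(1,m+v)$. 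Thus conjugating $\FAlt(\Omega_{S'})$ by powers of $h_2$ slides this block of $2v$ consecutive points along $R_1$ in steps of $v$, so the conjugates $h_2^{-jv}\FAlt(\Omega_{S'})h_2^{jv}$ for $j\ge 0$ are alternating groups on the blocks $\{(1,p+jv+1),\dots,(1,p+jv+2v)\}$.

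The first main step is therefore: consecutive such blocks overlap in $v\ge 4$ points (since the block has length $2v$ and we shift by $v$), so two copies of $\FAlt$ on overlapping $(\ge 5)$-point sets generate $\FAlt$ on the union; iterating, $\langle X\rangle$ contains $\FAlt$ of every finite subset of $R_1^{(p+1)}$, hence contains $\FAlt(R_1^{(p+1)})$. (This uses the standard fact that if two finite alternating groups overlap in at least $3$ points and the ambient sets are large enough, their join is the alternating group on the union — concretely, $\FAlt$ on an $m$-set and on an $m$-set sharing $v\ge 3$ common points, with each set having $\ge 5$ elements, generate $\FAlt$ on the $(2m-v)$-set; this is where $v\ne 2$ is used, consistent with the standing assumption.)

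Having obtained $\FAlt(R_1^{(p+1)})\le\langle X\rangle$, set $d:=p+1$; then by the Remark quoted above, $\langle \FAlt(R_1^{(d)}),h_2,\dots,h_n,h\rangle = U_v$, and since all of $\FAlt(\Omega_{S'})$, $h_2,\dots,h_n,h$ lie in $\langle X\rangle$, we conclude $\langle X\rangle = U_v$. The step I expect to be the only genuine obstacle is the overlapping-blocks argument — specifically checking that a translate of the $\FAlt(\Omega_{S'})$-block by $h_2$ really is the full alternating group on the shifted block (which relies on $z_1(h_2)\le p$ so that no ``transient'' behaviour of $h_2$ interferes) and that the overlap of $v$ points is enough; everything after that is a direct appeal to the preceding Remark.
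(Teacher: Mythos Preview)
Your proposal is correct and takes essentially the same approach as the paper: reduce to showing $\FAlt(R_1^{(p+1)})\le\langle X\rangle$ via conjugates of $\FAlt(\Omega_{S'})$ by powers of $h_2$, then invoke the preceding remark. The paper phrases the key step slightly more concretely---it asserts $((1,p+1)\;(1,p+2)\;(1,p+m))\in\langle\FAlt(\Omega_{S'}),h_2\rangle$ for all $m\ge3$ and then uses the explicit identities $(1\;2\;m_1)(1\;2\;m_2)^{-1}=(2\;m_1\;m_2)$ and $(1\;2\;m_3)^{-1}(2\;m_1\;m_2)(1\;2\;m_3)=(m_1\;m_2\;m_3)$ to reach every 3-cycle---whereas you package the same mechanism as an overlapping-blocks argument; neither buys anything the other doesn't. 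One notational slip: the exponent in $h_2^{-jv}\FAlt(\Omega_{S'})h_2^{jv}$ should be $j$, not $jv$, since $h_2$ itself already shifts $R_1^{(p+1)}$ by $v$, so $h_2^{j}$ gives the block $\{(1,p+jv+1),\dots,(1,p+jv+2v)\}$.
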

\begin{proof} From the preceding remark, we need only show that $\FAlt(R_1^{(d)})\le \langle X\rangle$ for some $d\in\N$. Note, for every $m\in\{3, 4, \ldots\}$, that $((1,p+1)\;(1,p+2)\;(1,p+m))\in\langle\FAlt(\Omega_{S'}), h_2\rangle\le\langle X\rangle$. Now, from the simple calculations
\begin{equation*}
(1\;2\;m_1)(1\;2\;m_2)^{-1}=(2\;m_1\;m_2)\text{ and }(1\;2\;m_3)^{-1}(2\;m_1\;m_2)(1\;2\;m_3)=(m_1\;m_2\;m_3)
\end{equation*}
we have that every 3-cycle in $\FAlt(R_1^{(p+1)})$ lies in $\langle X\rangle$.
\end{proof}

Our issue now is that elements of $\FAlt(X_n)$ are too malleable under conjugacy in $U_m$. This is the reason for the elements $g_2^{2v}\sigma_1$ and $g_2^{2v}\sigma_2$ appearing in $S$, and we now motivate their introduction. We first recall some notation introduced in \cite[Defn. 2.4, 2.5]{ConjHou}, which provides language to adequately describe the infinite orbits of elements of $H_n$. We remove some of their generality, since it is not required for our needs.

\begin{defn*} We say two sets $A$ and $B$ are \emph{almost equal} if $|(A\cup B)\setminus (A\cap B)|<\infty$.
\end{defn*}

\begin{not*} Let $i\in \{1, 2\}$ and $r\in \{1, \ldots, 2v\}$. Then $R_{i,r}:=\{(i, 2vk+r)\;:\;k\in\N\}$.
\end{not*}

For any $\sigma\in\FAlt(\Omega_{S'})$, each infinite orbit of $g_2^{2v}\sigma$ is almost equal to
\begin{equation*}\label{eqnorbits} R_{2, 2v+1-r}\cup R_{1, (r)\sigma}
\end{equation*}
for some $r\in\{1,\ldots,2v\}$. Note that this also applies to $\sigma=\id$.
\begin{lem} \label{FSympreserves} Let $g, h \in H_n$ be $\FSym(X_n)$-conjugate. Then there exists a $d\in\N$ such that for each $x\in\bigcup_{i=1}^nR_i^{(d)}$ we can find an $e\in\N$ with $(x)g^{e+k}=(x)h^{e+k}$ for every $k\in\N$.
\end{lem}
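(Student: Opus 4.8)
The plan is to reduce everything to one algebraic identity and then argue that the forward $g$-orbit of a sufficiently deep point eventually escapes the (finite) support of the conjugating element. Write $h=\gamma^{-1}g\gamma$ with $\gamma\in\FSym(X_n)$, and set $F:=\supp(\gamma)$. I would choose $d\in\N$ large enough that $d\ge z_i(g)$ for every $i\in\{1,\ldots,n\}$ \emph{and} $F\cap R_i^{(d)}=\emptyset$ for every $i$; both requirements involve only finitely many data, so this is possible. I claim that this $d$ works, the value of $e$ being allowed to depend on the chosen point $x$.

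The first ingredient is purely formal: if $x\notin F$ then $x$ is fixed by $\gamma$ (and by $\gamma^{-1}$), so $(x)h^k=(x)(\gamma^{-1}g\gamma)^k=(x)\gamma^{-1}g^k\gamma=\big((x)g^k\big)\gamma$ for every $k$; hence whenever $(x)g^k\notin F$ we in fact have $(x)h^k=(x)g^k$. Since every $x\in\bigcup_{i=1}^n R_i^{(d)}$ lies outside $F$ by the choice of $d$, the whole statement reduces to the claim that for each such $x$ there is an $e\in\N$ with $(x)g^{e+k}\notin F$ for all $k\in\N$; the displayed identity then yields $(x)g^{e+k}=(x)h^{e+k}$ immediately.

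The second ingredient is a short analysis of $g$ on $\bigcup_{i=1}^n R_i^{(d)}$. For $x=(i,m)$ with $m\ge d\ge z_i(g)$: if $t_i(g)=0$ then $(i,m)g=(i,m)$, so the forward $g$-orbit of $x$ is just $\{x\}\subseteq X_n\setminus F$; if $t_i(g)>0$ then $(x)g^k=(i,m+kt_i(g))$ for all $k\ge 0$, and if $t_i(g)<0$ then $(x)g^{-k}=(i,m-kt_i(g))$ for all $k\ge 0$, so in either nonzero case the $g$-orbit of $x$ is infinite. Therefore the forward orbit $\{(x)g^k:k\ge 0\}$ is either the single point $x\notin F$, or an injective sequence, which can visit the finite set $F$ only finitely often; taking $e:=1+\max\{k\ge 0:(x)g^k\in F\}$ (and $e:=1$ when this set is empty) gives $(x)g^{e+k}\notin F$ for every $k\in\N$, completing the argument.

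I expect the only genuine obstacle to be the second ingredient, specifically the need to rule out non-trivial \emph{finite} $g$-orbits meeting the deep parts of the rays. If such an orbit existed and happened to pass through $\supp(\gamma)$, then $(x)g^{e+k}$ would re-enter $F$ for infinitely many $k$ and the conclusion would fail; this is exactly why $d$ must be chosen at least $\max_i z_i(g)$, and it is the one place where the hypothesis $x\in\bigcup_{i=1}^n R_i^{(d)}$ is really used. Everything else is bookkeeping with the conjugation identity, and no information about $h$ beyond $h=\gamma^{-1}g\gamma$ (in particular, none of the orbit-structure facts recalled above) is needed.
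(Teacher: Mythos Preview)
Your argument is correct. The conjugation identity $(x)h^{k}=\big((x)g^{k}\big)\gamma$ for $x\notin F$ is valid, and your case split on the sign of $t_i(g)$ cleanly rules out nontrivial finite $g$-orbits through points of $\bigcup_i R_i^{(d)}$, so the forward $g$-orbit of any such $x$ is either a single fixed point outside $F$ or an injective sequence meeting $F$ only finitely often. That is exactly what is needed.

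Your route differs from the paper's. The paper argues via the disjoint cycle decomposition: it writes $g=\alpha\cdot\prod_j(\ldots\,x_{-1}^{(j)}\;x_0^{(j)}\;x_1^{(j)}\,\ldots)$ with $\alpha\in\FSym(X_n)$, so that $h=\omega^{-1}g\omega$ has the same shape with each entry replaced by its $\omega$-image; since $\omega$ has finite support, the infinite cycles of $g$ and of $h$ agree outside a finite window, and the conclusion (and the explicit $d$ in the subsequent remark) is read off from that picture. Your proof bypasses the cycle decomposition entirely and works directly with the conjugation identity and an orbit-escape argument, choosing $d\ge\max_i z_i(g)$ together with $F\cap R_i^{(d)}=\emptyset$. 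The paper's approach gives a more structural description of how the orbits of $g$ and $h$ match up (which is what is actually used later in the section), while yours is more self-contained and makes the dependence of $e$ on $x$ completely explicit; both reach the same endpoint with comparable effort.
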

\begin{proof} Write $g$ in disjoint cycle notation, so that $g\alpha^{-1}=\prod_{j=1}^a(\ldots x_{-1}^{(j)}\;x_0^{(j)}\;x_1^{(j)}\ldots)$ for some $a\in\N$ and $\alpha\in\FSym(X_n)$. Now $h=\omega^{-1}g\omega$ for some $\omega\in\FSym(X_n)$ and $\omega^{-1}g\omega=\omega^{-1}\alpha \omega \prod_{j=1}^a(\ldots (x_{-1}^{(j)})\omega\;(x_0^{(j)})\omega\;(x_1^{(j)})\omega\ldots)$.
\end{proof}

\begin{rem} Note that $\max_{i\in\{1, \ldots, n\}}\{|t_i(g)|, |t_i(h)|\}+\max_{i\in\{1, \ldots, n\}}\{z_i(g), z_i(h)\}$ is a suitable value for $d$ in the preceding lemma.
\end{rem}

The preceding lemma applies to $h_2$ and $g_2^{v}$. Thus it also applies to $h_2^2$ and $g_2^{2v}$. We now show that there are elements in $\langle S'\rangle$ that are $\FSym(X_n)$-conjugate to $g_2^{2v}\sigma_1$ and $g_2^{2v}\sigma_2$, and fix notation for these two elements.

\begin{lem} For $j\in\{1, 2\}$, $\langle S'\rangle$ contains an $\FAlt(X_n)$-conjugate of $g_2^{2v}\sigma_j$.
\end{lem}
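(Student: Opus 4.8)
The plan is to track the elements $g_2^{2v}\sigma_1$ and $g_2^{2v}\sigma_2$ through the same machinery that produced $h_2,\ldots,h_n$ from $S'$. Write $S'$-members $(g_2^{2v}\sigma_j)' = c_j^{-1}(g_2^{2v}\sigma_j)c_j$ for some $c_j\in U_v$; since translations are preserved by conjugacy, $\underline{t}((g_2^{2v}\sigma_j)')=\underline{t}(g_2^{2v}\sigma_j)=\underline{t}(g_2^{2v})=2v\cdot\underline{t}(g_2)$, and this value lies in $\underline{t}(\langle S'\rangle)=\underline{t}(U_v)$ (the latter equality was recorded when $u_i,w_i$ were introduced). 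So, exactly as in the construction of the $h_i$, there is a word $y_j\in\langle S'\rangle$ with $\underline{t}(y_j)=\underline{t}(c_j)$, and then $e_j := y_j\,(g_2^{2v}\sigma_j)'\,y_j^{-1} = y_jc_j^{-1}(g_2^{2v}\sigma_j)c_jy_j^{-1}\in\langle S'\rangle$ differs from $g_2^{2v}\sigma_j$ by an element of $\FSym(X_n)$. The point of the $2v$-th powers (and of $v\ne2$) is that this difference is actually even: both $g_2^{2v}$ and $\sigma_j$, together with every element of $\FAlt(X_n)$, lie in $U_v$, and $e_j\in\langle S'\rangle\le U_v$, so $e_j(g_2^{2v}\sigma_j)^{-1}\in U_v\cap\FSym(X_n)=\FAlt(X_n)$. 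Hence $e_j$ is an $\FAlt(X_n)$-conjugate... — well, it is of the form $\tau_j g_2^{2v}\sigma_j$ with $\tau_j\in\FAlt(X_n)$; the remaining task is to replace it by a genuine $\FAlt(X_n)$-conjugate of $g_2^{2v}\sigma_j$ lying in $\langle S'\rangle$.

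For that last replacement step I would argue as follows. The element $e_j=\tau_j g_2^{2v}\sigma_j$ is $\FSym(X_n)$-conjugate to $g_2^{2v}\sigma_j$: both have the same cycle type (one has only to note that $\tau_j$ is a finitely-supported even permutation, so conjugating $g_2^{2v}\sigma_j$ by a suitable finitely supported permutation realises the orbit structure of $e_j$, using that the infinite orbits of $g_2^{2v}\sigma_j$ are the almost-disjoint unions $R_{2,2v+1-r}\cup R_{1,(r)\sigma_j}$ described just before Lemma~\ref{FSympreserves}, and modifying finitely many points only rearranges which finite points sit on which infinite orbit). Whether that $\FSym(X_n)$-conjugating element can be taken in $\FAlt(X_n)$ is exactly the question of whether $e_j$ and $g_2^{2v}\sigma_j$ are conjugate in $\FAlt(X_n)$-cosets, and since we already know $e_j(g_2^{2v}\sigma_j)^{-1}\in\FAlt(X_n)$ and the stabiliser considerations only involve moving finitely many points around fixed infinite orbits, a parity count gives the $\FAlt$-conjugacy. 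One then sets the desired elements: call them $f_{\sigma_1}$ and $f_{\sigma_2}$ (or whatever notation the next paragraph fixes), each an $\FAlt(X_n)$-conjugate of $g_2^{2v}\sigma_j$ inside $\langle S'\rangle$.

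The main obstacle I anticipate is precisely the upgrade from ``$\FSym(X_n)$-conjugate with even difference'' to ``$\FAlt(X_n)$-conjugate'': one must check that the centraliser of $g_2^{2v}\sigma_j$ in $\FSym(X_n)$ is not contained in $\FAlt(X_n)$ — equivalently that $g_2^{2v}\sigma_j$ commutes with some odd finitely-supported permutation — or else handle the case where it is. Here the structure of $g_2^{2v}\sigma_j$ helps: away from $\Omega$ and $\Omega_{S'}$ the element $g_2^{2v}$ acts on $R_1\cup R_2$ by shifting along the $2v$ residue classes, and this rigid structure together with the explicit form of $\sigma_1$ (a $3$-cycle) and $\sigma_2$ (a transposition times a $(2v-2)$-cycle) should let one exhibit an odd permutation centralising $g_2^{2v}\sigma_j$, or directly build the $\FAlt$-conjugator by hand. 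This is the step that genuinely uses $v\ne 2$ and the specific choice of $\sigma_1,\sigma_2$; the rest is bookkeeping with translation-equivalence already set up in the section.
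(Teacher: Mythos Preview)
Your first paragraph is essentially the paper's proof, and it already finishes the job --- you just don't notice it. You write $e_j = y_j c_j^{-1}(g_2^{2v}\sigma_j)c_j y_j^{-1}$, which exhibits $e_j$ as the conjugate of $g_2^{2v}\sigma_j$ by the element $c_j y_j^{-1}$. Now apply to $c_j y_j^{-1}$ exactly the argument you apply to $e_j(g_2^{2v}\sigma_j)^{-1}$: we have $\underline{t}(c_j y_j^{-1})=0$ so $c_j y_j^{-1}\in\FSym(X_n)$, and $c_j\in U_v$, $y_j\in\langle S'\rangle\le U_v$ give $c_j y_j^{-1}\in U_v$, hence $c_j y_j^{-1}\in U_v\cap\FSym(X_n)=\FAlt(X_n)$. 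So $e_j$ is already an $\FAlt(X_n)$-conjugate of $g_2^{2v}\sigma_j$ lying in $\langle S'\rangle$. This is precisely the paper's argument (with $c_j$ playing the role of $u_j$ and $y_j$ the role of $w_{u_j}$).

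Your second and third paragraphs are therefore unnecessary. The confusion is that you switch from viewing $e_j$ as a \emph{conjugate} (which you wrote down explicitly) to viewing it as a \emph{product} $\tau_j\cdot g_2^{2v}\sigma_j$, and then try to recover conjugacy from the product form via centraliser parity. That detour is avoidable, and the centraliser argument you sketch is not fully justified as written (for instance, you would need to actually exhibit an odd finitely-supported element commuting with $g_2^{2v}\sigma_j$; the remark that ``$v\ne2$ and the specific choice of $\sigma_1,\sigma_2$'' help here is not substantiated). Drop paragraphs two and three and observe that the conjugator itself is even.
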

\begin{proof} We note that $\{h_2,\ldots, h_n\}$ is translation equivalent to $\{g_2^v,\ldots, g_n^v\}$, and so for each $g\in U_v$ we can construct an element $w_g\in \langle h_2,\ldots, h_n\rangle$ such that $\underline{t}(w_g)= \underline{t}(g)$. Let $j\in\{1,2\}$. Then both $(g_2^{2v}\sigma_j)'=u_j^{-1}g_2^{2v}\sigma_ju_j$ and $w_{u_j}$ lie in $\langle S'\rangle$, meaning that $\langle S'\rangle$ contains an $\FAlt(X_n)$-conjugate of $g_2^{2v}\sigma_j$.
\end{proof}

\begin{not*} For $j\in\{1, 2\}$, let $s_{\sigma_j}$ denote a fixed choice of element in $\langle S'\rangle$ that is $\FAlt(X_n)$-conjugate to $g_2^{2v}\sigma_j$.
\end{not*}
The following lemma shows the usefulness of the elements $s_{\sigma_1}$ and $s_{\sigma_2}$.
\begin{lem} \label{keyrem} Let $d\in\N\cup\{0\}$ and $\sigma\in\FAlt(\Omega)$. Then there exists a $w\in\langle S'\rangle$ where, for each $i\in\{0, \ldots, d\}$, we have that $(x)w=(x)g_2^{-p-2vi}\sigma g_2^{p+2vi}$ for every $x\in\Omega_{S'}g_2^{2vi}$.
\end{lem}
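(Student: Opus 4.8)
The plan is to reduce to the two generators $\sigma_1,\sigma_2$ of $\FAlt(\Omega)$, to exhibit $w$ as an explicit word in $s_{\sigma_1}$ (respectively $s_{\sigma_2}$) and $h_2$, to verify the required identities in the ``standard'' situation where all chosen conjugates are trivial, and finally to transfer the computation to the given conjugates using Lemma~\ref{FSympreserves}. For the reduction, since $\langle\sigma_1,\sigma_2\rangle=\FAlt(\Omega)$, write $\sigma=\rho_1\ldots\rho_\ell$ with each $\rho_j\in\{\sigma_1^{\pm1},\sigma_2^{\pm1}\}$, and suppose the lemma is already known for each $\rho_j$ in place of $\sigma$, producing $w_j\in\langle S'\rangle$. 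Because $g_2^{-p-2vi}\rho_jg_2^{p+2vi}$ is supported on, and is a permutation of, the block $\Omega_{S'}g_2^{2vi}$, the hypothesis on $w_j$ forces $w_j$ to restrict to a bijection of $\Omega_{S'}g_2^{2vi}$ for every $i\le d$. Hence for $x\in\Omega_{S'}g_2^{2vi}$ one has $(x)w_1\ldots w_\ell=(x)\prod_{j}g_2^{-p-2vi}\rho_jg_2^{p+2vi}=(x)g_2^{-p-2vi}\sigma g_2^{p+2vi}$, so $w:=w_1\ldots w_\ell$ works for $\sigma$, and $w^{-1}$ works for $\sigma^{-1}$ (again using that $w$ maps each such block to itself). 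Thus it suffices to treat $\sigma\in\{\sigma_1,\sigma_2\}$.

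Set $q:=p/(2v)$, which is a positive integer since $p$ is a positive multiple of $2v$, so that $\Omega_{S'}g_2^{2vi}=\{(1,2v(q+i)+1),\ldots,(1,2v(q+i)+2v)\}$ and $g_2^{-p-2vi}\sigma g_2^{p+2vi}$ is precisely the copy of $\sigma$ inside that block. For $\sigma=\sigma_1$ (the case $\sigma=\sigma_2$ being identical, with $s_{\sigma_2}$ in place of $s_{\sigma_1}$) I would take
\[
  w\ :=\ h_2^{-2(q+d+1)}\,s_{\sigma_1}^{d+1}\,h_2^{2q}\ \in\ \langle S'\rangle .
\]
In the standard situation $h_2=g_2^v$ and $s_{\sigma_1}=g_2^{2v}\sigma_1$; pushing all powers of $g_2^{2v}$ to the right shows $(g_2^{2v}\sigma_1)^{d+1}$ is a product of $d+1$ disjoint $3$-cycles occupying the first $d+1$ length-$3$ slots of $R_2$ next to the corner, times $g_2^{2v(d+1)}$. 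Conjugating by the surrounding powers of $g_2$ moves these $3$-cycles across the corner onto $R_1$ exactly at the blocks $\Omega_{S'},\Omega_{S'}g_2^{2v},\ldots,\Omega_{S'}g_2^{2vd}$ and cancels the leftover $g_2^{2v(d+1)}$, so that in the standard situation $w=\prod_{i=0}^{d}g_2^{-p-2vi}\sigma_1g_2^{p+2vi}$ and the conclusion even holds on all of $X_n$, not just on the $d+1$ blocks.

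To transfer this to the given conjugates I would use two facts. First, $h_2$ is an $\FAlt(X_n)$-conjugate of $g_2^v$ with $p\ge z_1(h_2)$, so $h_2^{\pm2k}$ acts on every point of $R_1$ of coordinate at least $p$ exactly as $g_2^{\pm2vk}$; this is what makes the outer powers of $h_2$ in $w$ behave as the powers of $g_2$ used in the model computation, in particular $h_2^{2q}$ carries each $\Omega_{S'}g_2^{2vi}$ to a fixed natural block far out on $R_1$. Secondly, $s_{\sigma_1}$ is $\FSym(X_n)$-conjugate to $g_2^{2v}\sigma_1$, so by Lemma~\ref{FSympreserves} (with the explicit bound in the remark following it) there is a finite threshold beyond which the orbits of $s_{\sigma_1}$ agree with those of $g_2^{2v}\sigma_1$; since $g_2^{2v}\sigma_1$ differs from $g_2^{2v}$ only by a rewiring of three orbits at the corner, $s_{\sigma_1}^{d+1}$ translates any sufficiently far out block of $R_1$ out by $2v(d+1)$ exactly as $(g_2^{2v}\sigma_1)^{d+1}$ does, the rewiring staying confined near the corner. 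Feeding these facts into the model computation gives that $w$ acts on each $\Omega_{S'}g_2^{2vi}$ (for $0\le i\le d$) as the copy of $\sigma_1$ there, its behaviour elsewhere now being irrelevant; the case $\sigma=\sigma_2$ is the same with $s_{\sigma_2}$.

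The delicate point, and the place to be careful, is precisely this transfer step. The permutations $s_{\sigma_1}$ and $h_2$ differ from $g_2^{2v}\sigma_1$ and $g_2^v$ by finite-support elements whose supports are not controlled by $d$, and one must check that, after the conjugations appearing in $w$, the prescribed blocks $\Omega_{S'}g_2^{2vi}$ indeed lie in the region where $s_{\sigma_1}$ and $h_2$ behave as in the model, and that the part of $s_{\sigma_1}^{d+1}$ that rewires orbits at the corner contributes exactly the copy of $\sigma_1$ on each of them; if needed this is arranged by a harmless cancellation, replacing $w$ by a product of $w$ with a conjugate of it that throws the uncontrolled contribution onto blocks beyond $\Omega_{S'}g_2^{2vd}$, which is permissible since the lemma constrains $w$ only on blocks $0,\ldots,d$. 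Lemma~\ref{FSympreserves}, the choice $p\ge z_1(h_2)$, and the standing assumption $v\ne2$ (so that all $3$-cycles of $A_{2v}$ are conjugate there) are exactly what make this bookkeeping on $X_n$ go through.
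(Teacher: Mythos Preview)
Your reduction to $\sigma\in\{\sigma_1,\sigma_2\}$ by composing the $w_j$'s is fine and is exactly what the paper does at the end of its proof. The gap is in the transfer step, and it is genuine.

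In the paper's right-action convention your element $w=h_2^{-2(q+d+1)}s_{\sigma_1}^{d+1}h_2^{2q}$ is applied left to right: first $h_2^{-2(q+d+1)}$, then $s_{\sigma_1}^{d+1}$, then $h_2^{2q}$. Your verbal description reverses this (``$h_2^{2q}$ carries each $\Omega_{S'}g_2^{2vi}$ to a fixed natural block far out on $R_1$'', then $s_{\sigma_1}^{d+1}$ translates, then $h_2^{-2(q+d+1)}$ returns). Under that reading the blocks never leave the region where $s_{\sigma_1}$ acts as a pure shift, and $w$ would act as the identity on every block even in the standard case. So the mechanism you describe is not the mechanism that makes the standard computation work; the standard computation works because the \emph{first} factor $g_2^{-2v(q+d+1)}$ drags each point across the corner onto $R_2$, the middle factor then pushes it back through the corner (where it meets $\sigma_1$), and only then does the last factor adjust inside $R_1$.

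Once one sees that the corner crossings are essential, the specific exponents $q+d+1,\ d+1,\ q$ are seen to be wrong for the actual conjugates. They are determined by $p$ and $d$, but $p$ only encodes $z_1(h_2)$; nothing controls $z_2(h_2^{-1})$, $z_1(s_{\sigma_1})$, or $z_2(s_{\sigma_1})$. Hence after $h_2^{-2(q+d+1)}$ there is no reason the image lies in the region of $R_2$ on which $s_{\sigma_1}$ is an eventual translation, and after $s_{\sigma_1}^{d+1}$ there is no reason the image lies above $z_1(h_2)$ on $R_1$. Concretely, $w\cdot w_{\mathrm{std}}^{-1}\in\FAlt(X_n)$ is an element whose support can meet the target blocks; your ``harmless cancellation'' would have to undo that error \emph{on} the blocks, which is exactly the statement you are trying to prove.

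The paper's proof fixes this by taking $w_j=h_2^{-2a}s_{\sigma_j}^{\,b}h_2^{-2c}$ (note both outer exponents are negative) with $a,b$ chosen large relative to a threshold $z$ built from the $z_i$-values of $h_2^{\pm2}$ and $s_{\sigma_j}^{\pm1}$, and with $c$ read off at the end. The key observation is that $\sigma_j$ is not encoded ``near the corner'' of $s_{\sigma_j}$ but in its infinite orbit structure: the orbit almost equal to $R_{2,2v+1-r}\cup R_{1,(r)\sigma_j}$. One therefore pushes $(1,p+r)$ with $h_2^{-2a}$ far onto $R_{2,2v+1-r}$, then uses $s_{\sigma_j}^{\,b}$ to traverse that orbit back to $R_{1,(r)\sigma_j}$, and finally slides down inside $R_1$ with $h_2^{-2c}$. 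Enlarging $(a,b)\mapsto(a+k,b+2k)$ makes the same word work on blocks $0,\ldots,k$ simultaneously. This explicit bookkeeping with Lemma~\ref{FSympreserves} is precisely what your argument is missing.
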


\begin{proof} Let $z:=2v+\max\{z_1(s_{\sigma_1}), z_2(s_{\sigma_1}), z_1(s_{\sigma_2}), z_2(s_{\sigma_2}), z_1(h_2^{-1}), z_2(h_2^{-1})\}$ and $j\in\{1,2\}$. We will show that there exists $a, b \in \N$ such that for any $r\in\{1, \ldots, 2v\}$:
\begin{enumerate}[1.]
\item $(1, p+r)h_2^{-2a}=(2, 2vk+1-r)$ for some $k\in\N$ so that $(1, p+r)h_2^{-2a}\in R_2^{(z)}$;
\item $(1, p+r)h_2^{-2a}s_{\sigma_j}^b=(1, p+(r)\sigma_j+2cv)$ for some $c\in\N$; and so
\item $(1, p+(r)\sigma_j+2cv)h_2^{-2c}=(1, p+(r)\sigma_j)$.
\end{enumerate}
We start with $b\in\N$. Apply Lemma \ref{FSympreserves} to $s_{\sigma_j}$ and $g_2^{2v}\sigma_j$. Then, for all but finitely many $x\in R_{2, 2v+1-r}$, there exists a suitably large power of $s_{\sigma_j}$ sending $x$ to $R_{1, (r)\sigma_j}$. Similarly we can apply Lemma \ref{FSympreserves} to $h_2^2$ and $g_2^{2v}$ to find a suitable $a\in\N$. For $j\in\{1, 2\}$ and any $k_1, k_2, k_3\in\N$, let $w_j^{(k_1, k_2, k_3)}:=h_2^{-2k_1}s_{\sigma_j}^{k_2}h_2^{-2k_3}$.

We now note that we can replace $a$ and $b$ with $a'$ and $b'$ where $a':=a+k$ and $b':=b+2k$ for any $k \in \N$. Then $w_j^{(a',b',c)}\in\langle S'\rangle$ and, for every $i\in \{0, 1, \ldots, k\}$, we have that $(x) w_j^{(a',b',c)}=(x)g_2^{-p-2vi}\sigma_jg_2^{p+2vi}$ for every $x\in\Omega_{S'}g_2^{2vi}$. Now, because $\langle \sigma_1, \sigma_2\rangle=\FAlt(\Omega)$, we can produce the required element using a word made from powers of $w_1^{(k_1, k_2, k_3)}$ and $w_2^{(k_1, k_2, k_3)}$ where $k_1, k_2, k_3\in \N$ are chosen to be suitably large.
\end{proof}

\begin{lem} If $\langle S'\rangle$ contains $(x_1\;x_2\;x_3)$ with $x_1, x_2, x_3\in \Omega_{S'}$, then $\langle S'\rangle=U_v$.
\end{lem}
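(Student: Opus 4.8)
The plan is to reduce everything to the single claim that $\FAlt(\Omega_{S'})\le\langle S'\rangle$. Granting this, Lemma~\ref{reducetoomega} gives $U_v=\langle \FAlt(\Omega_{S'})\cup\{h_2,\ldots,h_n,h\}\rangle\le\langle S'\rangle$, since $h_2,\ldots,h_n,h\in\langle S'\rangle$ by construction; and the reverse inclusion $\langle S'\rangle\le U_v$ holds because each $s'\in S'$ is a conjugate \emph{in $U_v$} of some $s\in S\subseteq U_v$. Hence $\langle S'\rangle=U_v$.

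For the reduction, write $\widehat{\sigma}:=g_2^{-p}\sigma g_2^p$ for $\sigma\in\FAlt(\Omega)$; this is the copy of $\sigma$ transported along the shift $(1,r)\mapsto(1,p+r)$, so $\sigma\mapsto\widehat{\sigma}$ is an isomorphism from $\FAlt(\Omega)$ onto $\FAlt(\Omega_{S'})$, and every element of $\FAlt(\Omega_{S'})$ is of the form $\widehat{\sigma}$. Write the given $3$-cycle as $c=(x_1\;x_2\;x_3)$ with $x_i=(1,p+r_i)$, and set $\bar c:=(r_1\;r_2\;r_3)\in\FAlt(\Omega)$, so that $c=\widehat{\bar c}$. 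Because $v\ne 2$ was imposed precisely so that all $3$-cycles of $\FAlt(\Omega)\cong A_{2v}$ are conjugate within $\FAlt(\Omega)$, and since $3$-cycles generate $\FAlt(\Omega)$, the set $\{\rho^{-1}\bar c\,\rho:\rho\in\FAlt(\Omega)\}$ generates $\FAlt(\Omega)$. Thus an arbitrary $\tau\in\FAlt(\Omega)$ can be written as $\tau=\prod_{j=1}^{m}\rho_j^{-1}\bar c^{\,\epsilon_j}\rho_j$ with $\rho_j\in\FAlt(\Omega)$ and $\epsilon_j\in\{1,-1\}$.

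The crucial step is the following ``purification''. For each $j$, apply Lemma~\ref{keyrem} with $d=0$ and $\sigma=\rho_j$ to obtain $w_j\in\langle S'\rangle$ with $(x)w_j=(x)\widehat{\rho_j}$ for every $x\in\Omega_{S'}$; so $w_j$ agrees with $\widehat{\rho_j}$ on $\Omega_{S'}$, even though it may move points outside $\Omega_{S'}$. Since $\supp(c)\subseteq\Omega_{S'}$, the element $w_j^{-1}c^{\,\epsilon_j}w_j\in\langle S'\rangle$ has support $(\supp c)\,w_j\subseteq\Omega_{S'}$, and on $\Omega_{S'}$ it acts as $\widehat{\rho_j}^{\,-1}\,\widehat{\bar c}^{\,\epsilon_j}\,\widehat{\rho_j}=\widehat{\rho_j^{-1}\bar c^{\,\epsilon_j}\rho_j}$; therefore $w_j^{-1}c^{\,\epsilon_j}w_j=\widehat{\rho_j^{-1}\bar c^{\,\epsilon_j}\rho_j}$. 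Multiplying over $j$ yields $\widehat{\tau}=\prod_{j=1}^{m}w_j^{-1}c^{\,\epsilon_j}w_j\in\langle S'\rangle$. As $\tau\in\FAlt(\Omega)$ was arbitrary, $\FAlt(\Omega_{S'})=\{\widehat{\tau}:\tau\in\FAlt(\Omega)\}\le\langle S'\rangle$, completing the argument.

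The only genuine idea here is the purification step: an element produced by Lemma~\ref{keyrem} need not lie in $\FAlt(\Omega_{S'})$, but because the support of $c$ is confined to $\Omega_{S'}$, conjugating $c$ by such an element cancels all of its behaviour off $\Omega_{S'}$ and leaves exactly the intended conjugate of $\bar c$. I do not expect any further obstacle beyond keeping the bookkeeping straight and invoking the standard fact that a single $3$-cycle normally generates $A_m$ for $m\ge 5$ — equivalently, that all $3$-cycles of $A_m$ are conjugate when $m\ge 5$, which is exactly the reason $v=2$ was excluded.
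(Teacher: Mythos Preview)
Your argument is correct and follows essentially the same route as the paper's proof: use Lemma~\ref{keyrem} (with $d=0$) to conjugate the given $3$-cycle inside $\langle S'\rangle$ to every $3$-cycle of $\FAlt(\Omega_{S'})$, then invoke Lemma~\ref{reducetoomega}. Your ``purification'' paragraph is exactly the content the paper compresses into the single sentence ``$(x_1\sigma\;x_2\sigma\;x_3\sigma)\in\langle S'\rangle$ for every $\sigma\in\FAlt(\Omega_{S'})$ by Lemma~\ref{keyrem}'', and your observation that $w_j$ restricts to a bijection of $\Omega_{S'}$ (so the extraneous support of $w_j$ cancels) is the implicit justification behind that sentence.
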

\begin{proof} Note that $(x_1\sigma\;x_2\sigma\;x_3\sigma)\in \langle S'\rangle$ for every $\sigma\in\FAlt(\Omega_{S'})$ by Lemma \ref{keyrem}. Hence $\langle S'\rangle$ contains all 3-cycles in $\FAlt(\Omega_{S'})$ and Lemma \ref{reducetoomega} yields the result.
\end{proof}

Our aim is now to show that there exists $(x_1\;x_2\;x_3)\in\langle S'\rangle$ with $x_1, x_2, x_3\in\Omega_{S'}$. We first show that this can be done if there exists $(a_1\;a_2\;a_3)\in\langle S'\rangle$ with $a_1, a_2, a_3$ on distinct orbits of $h_2^2$, and then show that such a 3-cycle does exist in $\langle S'\rangle$.
\begin{lem} If $\langle S'\rangle$ contains a 3-cycle $(a_1\;a_2\;a_3)$ with $a_1, a_2, a_3$ lying on distinct infinite orbits of $h_2^2$, then $\langle S'\rangle$ contains a 3-cycle with support contained in $\Omega_{S'}$.
\end{lem}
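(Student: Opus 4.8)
The plan is to push the support of the given $3$-cycle onto the first ray, recognise it as sitting inside the ``staircase'' of translated copies $\Omega_{S'}g_2^{2vi}$ of $\Omega_{S'}$, and then use Lemma~\ref{keyrem} together with conjugation by $h_2$ to slide its three points onto a common level before dropping them into $\Omega_{S'}$.

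First I would conjugate $(a_1\;a_2\;a_3)$ by a large positive power of $h_2^2$. Each $a_i$ lies on an infinite orbit of $h_2^2$, and every such orbit is almost equal to $R_{2,2v+1-r}\cup R_{1,r}$ for a unique $r\in\{1,\dots,2v\}$; since positive powers of $h_2^2$ push every point of an infinite orbit out along $R_1$, for a suitable power the support becomes $\{(1,p+2vj_i+r_i)\}$ with $j_i\ge 0$ and $r_1,r_2,r_3$ the residues attached to the three orbits, which are distinct by hypothesis. So $a_i$ now sits at ``level $j_i$'' and ``sub-position $r_i$'' inside the copy $\Omega_{S'}g_2^{2vj_i}$. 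Put $L:=\max_i j_i$; after conjugating by a further power of $h_2^2$ if necessary we may also assume all coordinates are large enough that $h_2$, $h_2^2$ and the elements furnished by Lemma~\ref{keyrem} (with $d=L$) act on them by the clean ``shift'' picture.

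The heart of the argument is a mechanism that raises the levels of a chosen proper subset of the three points while leaving the others where they are. Given $\sigma\in\FAlt(\Omega)$, Lemma~\ref{keyrem} (with $d=L$) supplies $w_\sigma\in\langle S'\rangle$ which, on each $\Omega_{S'}g_2^{2vi}$ with $0\le i\le L$, permutes the $2v$ points by $\sigma$ and fixes the level $i$. On the other hand, conjugating a $3$-cycle supported in $R_1^{(p+1)}$ by $h_2$ shifts every coordinate by $v$: a point at sub-position $r$ stays at its level $j$ (with new sub-position $r+v$) if $r\le v$, and moves up to level $j+1$ (with new sub-position $r-v$) if $r>v$. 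Since $\FAlt(\Omega)\cong A_{2v}$ with $2v\ge 8$ is at least $3$-transitive, for any assignment of the three (currently distinct) sub-positions to the halves $\{1,\dots,v\}$ and $\{v+1,\dots,2v\}$ there is a $\sigma$ realising it; so conjugating by $w_\sigma$ and then by $h_2$ increments exactly the levels of the prescribed points, keeps the object a $3$-cycle, and keeps all its points at levels in $\{0,\dots,L\}$. Starting from the levels $j_1,j_2,j_3$ and repeatedly raising whichever points currently lie below $L$ (routing their sub-positions into the upper half and the others into the lower half), after finitely many steps all three points share level $L$; conjugating by $(h_2^2)^{L}$ then drops them to level $0$, i.e.\ into $\Omega_{S'}$, producing the required $3$-cycle.

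The main obstacle is precisely what the previous paragraph circumvents: all of $h_2,\dots,h_n,h,s_{\sigma_1},s_{\sigma_2}$ act as honest translations far out on $R_1$, so conjugation by these alone can never alter the relative heights $j_i-j_k$, and a single power of $h_2^2$ only shifts the three points in lockstep. The leverage comes entirely from the half-level ``carry'' that $h_2$ produces when a sub-position exceeds $v$, and it is only Lemma~\ref{keyrem} that lets us steer the sub-positions so as to apply this carry selectively. The remaining care is bookkeeping: ensuring everything stays far enough out for the shift picture to hold, checking that the three sub-positions (always distinct, since we never leave the class of $3$-cycles) can be routed into the two halves at each stage because $v\ge 4$ leaves ample room, and simply omitting a raising step for any point that has already caught up.
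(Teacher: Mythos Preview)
Your argument is correct and rests on exactly the same mechanism as the paper's: use Lemma~\ref{keyrem} to steer the three residues mod $2v$, and then exploit the half-level carry of $h_2^{\pm1}$ to alter the relative levels of the three points. The implementations differ only in direction and endgame. The paper normalises so that $a_1\in\Omega_{S'}$, $a_2\in\Omega_{S'}h_2^{2k}$, $a_3\in\Omega_{S'}h_2^{2(k+l)}$, and repeatedly conjugates by elements of the form $ww_*h_2^{-1}w^{-1}$ (with $w,w_*$ coming from Lemma~\ref{keyrem}) to decrement $k$ while fixing $a_1$; once $k=0$ it finishes with a product trick, combining $(a_1\;a_2\;a_3)$ with its $w$-conjugate $(a_1'\;a_2\;a_3)$ to obtain a $3$-cycle supported on $\{a_1,a_2,a_1'\}\subset\Omega_{S'}$. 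You instead push the lagging points upward until all three share the top level $L$ and then translate down together; this is marginally cleaner since no separate base case is needed, and your observation that the new sub-positions after a raise step land in disjoint halves (hence remain distinct) is the right invariant to track. One small slip: the final conjugation should be by $(h_2^2)^{-L}$ (equivalently $h_2^{-2L}$), not $(h_2^2)^{L}$, to bring level $L$ down to level $0$.
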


\begin{proof} After relabelling our points and possibly conjugating $(a_1\;a_2\;a_3)$ by a suitable power of $h_2^2$, we may assume that $a_1\in \Omega_{S'}$, $a_2 \in\Omega_{S'}h_2^{2k}$, and $a_3\in \Omega_{S'}h_2^{2(k+l)}$ where $k, l \in \N\cup\{0\}$. If $k=0$ then apply Lemma \ref{keyrem} to obtain an element $w \in \langle S'\rangle$ such that $(a_1)w=a_1'\in \Omega_{S'}\setminus\{a_1, a_2\}$, $(a_2)w=a_2$, and $(a_3)w=a_3$. Thus $w^{-1}(a_1\;a_2\;a_3)w=(a_1'\;a_2\;a_3)$ and we can conjugate $(a_1\;a_2\;a_3)$ by $(a_1'\;a_2\;a_3)$ to obtain $(a_1\;a_2\;a_1')$ where $a_1, a_1', a_2\in \Omega_{S'}$.

Our aim is to reduce to this case. Recall that $\Omega_{S'}=\{(1, p+1), \ldots, (1, p+2v)\}$. Using Lemma \ref{keyrem} we construct:
\begin{enumerate}[i)]
\item $w\in \langle S'\rangle$ with $(a_1) w = a$, $(a_2) w = b$, and $(a_3) w =c$ where $a=(1, p+1)$, $b=(1,p+v+1)h_2^{2k}$, and $c=(1,p+2v)h_2^{2(k+l)}$; and
\item $w_* \in \langle S'\rangle$ with $(a)w_*=(1,p+v+1)$, $(b)w_*=(1,p+1)h_2^{2k} $, and $w_*(c)=c$.
\end{enumerate}
Now $(a_1)ww_*h_2^{-1}w^{-1}=a_1$, whereas $(a_2) ww_*h_2^{-1}w^{-1}\in \Omega_{S'}h_2^{2(k-1)}$. Thus, by repeatedly applying these steps, we can reduce to the case where $k=0$.
\end{proof}

\begin{lem} There exists a 3-cycle $(a_1\;a_2\;a_3)$ in $\langle S'\rangle$ with $a_1, a_2, a_3$ lying on distinct $h_2^2$-orbits.
\end{lem}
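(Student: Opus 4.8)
The plan is to start from the $3$-cycle $\sigma_1'\in S'\subseteq\langle S'\rangle$ (it is a $3$-cycle, being a conjugate by an element of $U_v$ of $\sigma_1=((1,1)\;(1,2)\;(1,3))$, and $\FAlt(X_n)$ is normal in $\Sym(X_n)$), to manoeuvre it into the ``infinite-orbit region'' of $h_2^2$, and then to correct the $h_2^2$-orbits of its three points one at a time. First I would conjugate $\sigma_1'$ by a large power of $h$ so that its support lies in $R_1^{(p+1)}$; this is possible because elements of $H_n$ are eventually translations and $\underline{t}(h)=((n-1)v,-v,\ldots,-v)$ has positive first coordinate and negative others, so the forward $h$-orbit of every point eventually enters $R_1$ and runs outwards along it. Call the conjugated $3$-cycle $\beta=(c_1\;c_2\;c_3)$ with $c_i=(1,m_i)$ and $m_1<m_2<m_3$. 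Since $h_2^2$ is $\FAlt(X_n)$-conjugate to $g_2^{2v}$, and $g_2^{2v}$ acts on $R_1\cup R_2\cong\Z$ as the shift by $2v$ while fixing $R_3\cup\cdots\cup R_n$ pointwise, $h_2^2$ has exactly $2v$ infinite orbits, each almost equal to a residue class mod $2v$ inside $R_1\cup R_2$; and since $m_i\ge p+1\ge z_1(h_2)$, the orbit of $c_i$ is the one meeting $R_1$ in residue class $m_i\bmod 2v$. If $m_1,m_2,m_3$ are pairwise incongruent mod $2v$ we are done. Otherwise two of them, say $c_1$ and $c_2$, lie on a common infinite orbit $O$; being distinct, $m_2-m_1$ is a positive multiple of $2v$, so in particular $m_2-m_1\ge 2v$.

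Next I would reduce the number of coincidences among the three orbits, and iterate. The reduction rests on the elementary identity: if $g\in\langle S'\rangle$ fixes $c_1$ and $c_3$ and sends $c_2$ to some $d\notin\{c_1,c_2,c_3\}$, then $\beta^{-1}(g^{-1}\beta g)=(c_2\;d\;c_3)$, a $3$-cycle in $\langle S'\rangle$ in which $c_1$ has effectively been replaced by $d$; choosing $d$ on an infinite orbit distinct from $O$ and from the orbit of $c_3$ either finishes the proof (when $c_3$ already lay on a third orbit) or strictly decreases the number of coincidences, so — since $2v\ge 8$ there is always a spare orbit for $d$ — at most two steps produce a $3$-cycle on three distinct infinite $h_2^2$-orbits. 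To produce such a $g$ I would use the elements $s_{\sigma_1},s_{\sigma_2}$: their infinite orbits are almost equal to $R_{2,\,2v+1-k}\cup R_{1,\,(k)\sigma_j}$, so (using $\langle\sigma_1,\sigma_2\rangle=\FAlt(\Omega)$) a suitable element of $\langle S'\rangle$ built from them drags a point from $R_2$, round the $R_1$–$R_2$ junction and out into $R_1$, changing its $h_2^2$-orbit from the one indexed by $k$ to the one indexed by $(k)\sigma_j$ — and $(k)\sigma_j$ can be made any index — whereas far out in $R_1$ these elements act as the translation by $2v$ and fix every infinite orbit. Thus, after conjugating $\beta$ by powers of $h$ and $h_2$ so as to bring $c_2$ into the junction region while $c_1$ and $c_3$ sit far out in $R_1$ (possible thanks to the gap $m_2-m_1\ge 2v$), a suitable word in $s_{\sigma_1},s_{\sigma_2}$ — for instance a conjugate by a power of $h_2^2$ of the finitely-supported element $s_{\sigma_j}h_2^{-2}$, which is trivial outside a bounded neighbourhood of the junction — fixes $c_1$ and $c_3$ and sends $c_2$ onto the prescribed orbit. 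For $n=2$ the argument is the same, with $h=h_2$, and no other $h_i$ is needed.

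The step I expect to be the main obstacle is this last positioning argument. The region near the $R_1$–$R_2$ junction in which $s_{\sigma_1},s_{\sigma_2}$ and $h_2$ deviate from being translations is finite but of a priori unknown size — it is governed by the $\FAlt(X_n)$-conjugations concealed in the definitions of $h_2$ and of the $s_{\sigma_j}$ — so one must conjugate with some care, and must treat separately the sub-case in which the partner of $c_2$ on $O$ lies below $c_2$ (so that dragging $c_2$ to the junction pushes that partner down into $R_2$), by changing which member of the coincident pair is moved or by inserting an extra reduction step. Verifying that a $g$ with the required ``fix two, move one'' behaviour can always be manufactured is the technical heart of the proof.
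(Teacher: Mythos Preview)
Your overall strategy---conjugate $\sigma_1'$ by a power of $h$ into $R_1^{(p+1)}$, then repair orbit coincidences via a ``fix two, move one'' element and the identity $\beta^{-1}(g^{-1}\beta g)$---matches the paper's. The gap is precisely where you flag it: the construction of $g$. In the case where exactly two of the three points (your $c_1,c_2$) share an $h_2^2$-orbit, you elect to fix $c_1,c_3$ and move $c_2$. This forces $g$ to distinguish two points lying on the \emph{same} $h_2^2$-orbit. The machinery actually available, Lemma~\ref{keyrem}, produces elements acting by a fixed $\sigma\in\FAlt(\Omega)$ uniformly on every block $\Omega_{S'}g_2^{2vi}$, hence treating all points of a given residue class alike; it cannot separate $c_1$ from $c_2$. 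The paper makes the opposite (and much easier) choice: fix the coincident pair and move the odd one out to a third orbit, which is a one-line application of Lemma~\ref{keyrem}. Your proposed workaround---position $c_2$ at the junction with $c_1,c_3$ far out in $R_1$, then apply something like $s_{\sigma_j}h_2^{-2}$---fails on two counts. First, conjugation by powers of $h$ and $h_2$ is eventually translation on $R_1\cup R_2$, so with $m_1<m_2$ you cannot place $c_2$ at the junction while keeping $c_1$ far out in $R_1$; and swapping to move $c_1$ instead gives, via your identity, the cycle $(c_1\;d\;c_2)$, which still has two points on $O$. Second, $s_{\sigma_j}h_2^{-2}$ is merely some element of $\FAlt(X_n)$ whose action on its finite support you do not control, so you cannot guarantee it sends the chosen point to a prescribed orbit.

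For the case where all three points lie on one orbit, Lemma~\ref{keyrem} is useless (any $\sigma$ moves or fixes all three together), and the paper supplies a genuinely different argument that your sketch does not: push the $3$-cycle into $R_2^{(z)}$ by a power of $h_2^{-2}$, then exploit that $\supp(\sigma_2)=\Omega$---so no infinite $s_{\sigma_2}$-orbit is almost contained in a single $h_2^2$-orbit---together with a minimality argument on the sequence $\big(f((2,m_a)s_{\sigma_2}^k)\big)_{k\ge0}$ to conjugate the $3$-cycle to one with exactly one point off the original orbit, reducing to the previous case.
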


\begin{proof} Choose a suitably large $k$ so that $\supp(h^{-k}\sigma_1'h^k)\subset R_1^{(p+1)}$. Then $h^{-k}\sigma_1'h^k=(a\;b\;c)\in\langle S'\rangle$ for some $a, b, c\in R_1^{(p+1)}$. We first consider the case where $b$ and $c$ lie on the same infinite orbit  $\mathcal{O}_z$ of $h_2^2$ and $a$ lies on a different infinite orbit $\mathcal{O}_y$ of $h_2^2$. Using Lemma \ref{keyrem} we can construct a $w\in \langle S'\rangle$ such that $(b)w=b$ and $(c)w=c$ but $(a)w=a'\in \mathcal{O}_{z'}$ where $\mathcal{O}_{z'}$ is an infinite orbit of $h_2^2$ distinct from $\mathcal{O}_y$ and $\mathcal{O}_z$. Hence $w^{-1}(a\;b\;c)w=(a'\;b\;c)$ and conjugating $(a\;b\;c)$ by $(a'\;b\;c)$ gives $(a\;b\;a')$, where $a, a'$, and $b$ lie on distinct orbits of $h_2^2$.

To deal with the second case where $a$, $b$, and $c$ all lie on the same infinite orbit $\mathcal{O}_y$ of $h_2^2$, we reduce this to our first case. Define $f:X_n\rightarrow \{0, 1\}$ by $f(x)=1$ if $x\in \supp(h_2)\setminus\mathcal{O}_y$ and $f(x)=0$ otherwise. Note that $f(xg)=f(x)$ for every $g\in\langle h_2^2\rangle$ and every $x\in X_n$. Let $z:=2v+\max\{z_1(h_2^{\pm 2}), z_2(h_2^{\pm 2}), z_1 (s_{\sigma_2}^{\pm 1}), z_2(s_{\sigma_2}^{\pm 1})\}$. First send $a, b$, and $c$ to $R_2^{(z)}$ by using a suitably large power of $h_2^{-2}$. Label these points as $(2, m_a)$, $(2, m_b)$, and $(2, m_c)$ where $m_a<m_b<m_c$. By construction $f((2, m_a))=f((2, m_b))=f((2, m_c))=0$. We can then consider the sequence
\begin{equation*}
\left(f\big((2,m_a)s_{\sigma_2}^k\big)\right)_{k\in \N\cup\{0\}}
\end{equation*}
where $s_{\sigma_2}^0:=\id$. Thus our sequence has first term zero. Importantly, this sequence only has finitely many terms equal to zero. This is because $\supp(\sigma_2)=\Omega$, and so there exists a suitably large power of $s_{\sigma_2}$ sending $(2, m_a)$ to an infinite orbit of $h_2^2$ distinct from $\mathcal{O}_y$. Let
\begin{equation*}
q:=\min\{k\in\N\;:\; f\big((2,m_x)s_{\sigma_2}^k\big)=1\}
\end{equation*}
and let $p_d:=(2, m_d)s_{\sigma_2}^q$ for each $d\in\{a, b, c\}$. We can then use a suitably large $k\in 2\N$ such that $p_ah_2^{-k}\in R_2^{(z)}$, $p_bh_2^{-k}\in R_2^{(z)}\cup\{p_b\}$, and $p_ch_2^{-k}\in R_2^{(z)}\cup\{p_c\}$. These sets must include $p_b$ and $p_c$ because there is no guarantee that $p_b, p_c\in \supp(h_2^2)$. Using a suitably large $l>q$ then ensures that $p_d':=(p_d)h_2^{-k}s_{\sigma_2}^{-l}\in R_2^{(z)}$ for each $d\in\{a, b, c\}$ where $p_a'\in \supp(h_2^2)\setminus \mathcal{O}_y$ but $p_b', p_c'\in \mathcal{O}_y$.
\end{proof}
Thus $\langle S'\rangle=U_v$ and we have proven Theorem \ref{mainthmB}.
\bibliographystyle{amsalpha}
\def\cprime{$'$}
\providecommand{\bysame}{\leavevmode\hbox to3em{\hrulefill}\thinspace}
\providecommand{\MR}{\relax\ifhmode\unskip\space\fi MR }
% \MRhref is called by the amsart/book/proc definition of \MR.
\providecommand{\MRhref}[2]{%
 \href{http://www.ams.org/mathscinet-getitem?mr=#1}{#2}
}
\providecommand{\href}[2]{#2}

\end{document}